\documentclass{article}

\usepackage[affil-it]{authblk}
\usepackage{amsmath}
\allowdisplaybreaks
\usepackage{mathrsfs}
\usepackage{mathtools}
\usepackage{amssymb} 
\usepackage{xspace}
\usepackage{amsthm}
\usepackage{enumitem}
\usepackage{algorithm}
\usepackage[noend]{algpseudocode}
\usepackage{tikz}
\usetikzlibrary{hobby}
\usepackage{subcaption}
\usetikzlibrary{positioning}
\usepackage{microtype}
\usepackage{hyperref}
\usepackage{booktabs}
\usepackage{multirow}
\usepackage[dvipsnames]{xcolor}
\usepackage{todonotes}
\usepackage{setspace}
\usepackage[margin=2.54cm]{geometry}
\usepackage{natbib}

\usepackage[normalem]{ulem}

\newcommand{\ZZ}{{\mathbb Z}}
\newcommand{\PP}{{\mathcal P}}
\newcommand{\NP}{{\mathcal{NP}}}
\newcommand{\FF}{{\mathcal F}}
\newcommand{\FORT}{{\mathscr F}}
\newcommand{\CC}{{\mathscr C}}
\newcommand{\FPSIP}{FPS-IP\xspace}
\newcommand{\FPSIPF}[1]{FPS-IP(#1)}
\newcommand{\EFPSIP}{EFPS-IP\xspace}
\newcommand{\BRIIP}{BRI-IP\xspace}
\newcommand{\JOVIP}{JOV-IP\xspace}
\newcommand{\FORTIP}{FORT-IP\xspace}

\newtheorem{proposition}{Proposition}
\newtheorem{corollary}{Corollary}
\newtheorem{lemma}{Lemma}
\newtheorem{remark}{Remark}
\newtheorem{definition}{Definition}

\title{Capacitated power dominating set problem: a solution approach based on forbidden propagation sets}

\author[1,2,*]{Mauro Lucci}
\author[3]{Diego Delle Donne}
\author[1,2]{Mariana Escalante}

\affil[1]{Depto. de Matemática (FCEIA), Universidad Nacional de Rosario, Av. Pellegrini 250, Rosario 2000, Argentina}
\affil[2]{CONICET, Ocampo y Esmeralda, Rosario 2000, Argentina}
\affil[3]{ESSEC Business School, Department of Information Systems, Data Analytics and Operations, 3 avenue Bernard Hirsch, Cergy 95021, France}
\affil[*]{Corresponding author: Mauro Lucci, mlucci@fceia.unr.edu.ar}

\date{}

\begin{document}

\maketitle

\begin{abstract}
The optimal placement of measurement devices in electrical power systems is commonly modeled through the power dominating set problem.
However, in real-world applications, these devices have limited capacities, leading to a capacitated variant of the problem that has received little attention in the literature.
In this work, we introduce forbidden propagation sets, novel combinatorial structures that cannot occur simultaneously in any feasible solution.
This notion enables a new class of integer linear programming formulations.
They combine infection-based variables with exponentially many constraints, while avoiding big-$M$ constraints.
We derive structural properties, valid inequalities, and redundancy-breaking constraints, and design an efficient lazy-separation procedure based on cycle detection.
Computational experiments on benchmark instances with up to 14,000 vertices show that the proposed method achieves an average execution-time improvement of 1.7x over existing approaches adapted from the literature.
Moreover, the results indicate that performance depends not only on network size, but also on capacities.
\end{abstract}

\section{Introduction}

Monitoring electric power systems is a fundamental control mechanism to ensure secure and reliable operations.
In recent decades, this area has received considerable attention, motivated by the widespread usage of measurement devices with high sampling rate and accuracy, known as \emph{phasor measurement units} (PMUs), and by the global interest in smart grids in the context of clean energies \citep{Joshi2021}.
The power dominating set problem (PDS) has emerged as a fundamental model for determining the minimum number of measurement devices, together with their optimal locations, required to monitor the entire network 
\citep{Baldwin1993}.
However, real-world applications often involve additional limitations and requirements that are not adequately captured by standard models.
Representative examples include limitations in the number of channels \citep{Korkali2009}, connectivity settings \citep{Fan2012}, and fault-tolerant strategies \citep{Almasabi2019}.
These considerations highlight the need for more realistic models that account for such practical constraints.

As a graph optimization problem, PDS was first introduced by \citet{Haynes2002}.
However, the definition most commonly used in the literature is due to \citet{Brueni2005}, who proposed a simplified but equivalent problem.
The objective is to find a subset $S$ of vertices with minimum cardinality, representing PMU locations, such that all vertices of the graph can be monitored through the sequential application of two rules.
First, each vertex in $S$ monitors its closed neighborhood, as in the well-known dominating set problem.
Second, a monitored vertex can monitor one of its neighbors if all its remaining neighbors are already monitored, a mechanism commonly referred to as propagation.
Furthermore, the last rule can only be applied at vertices satisfying the \emph{zero-injection} property, meaning they have neither generation nor load.
The main drawback of this standard model is that the first rule assumes that a PMU has sufficient channels (capacity) to monitor all of its neighbors, which is unrealistic in practice.
Only a few studies address this capacitated variant, called \emph{capacitated power dominating set} problem (CPDS), which introduces additional computational challenges: in contrast to PDS, it is necessary not only to decide the PMU locations, but also which neighbors each PMU monitors.

Regarding computational hardness, PDS remains NP-hard even when restricted to bipartite or chordal graphs \citep{Haynes2002}, as well as planar or split graphs \citep{Guo2008}.
On the other hand, linear-time algorithms have been developed for several classes, including trees \citep{Haynes2002}, block graphs \citep{Xu2006}, circular-arc graphs \citep{Liao2013}, and graphs of bounded treewidth \citep{Guo2008}.
Concerning exact algorithms for arbitrary graphs, a moderately exponential-time algorithm with running time $\mathcal{O}^*(1.7548^n)$ was theoretically proposed in \citet{Binkele-Raible2012}.
From a practical perspective, the most successful exact approaches are based on \emph{integer linear programming} (ILP).
Two main modeling paradigms have been explored.
Infection-based models explicitly track the application of the rules over time, as in \citet{Brimkov2019, Jovanovic2020}.
These formulations are compact, but use integer variables and big-$M$ constraints to model the timestep at which each vertex is monitored.
In contrast, fort-based models rely on a hitting-set characterization, where solutions must intersect the neighborhoods of certain combinatorial structures known as \emph{forts} \citep{Bozeman2019}.
These formulations use a single binary variable per vertex, but involve exponentially many constraints instead.
A fort-based algorithm with dynamic generation of constraints is the current state of the art, which can solve benchmark instances with up to 82,000 vertices in a few minutes \citep{Blasius2024}.

As discussed previously, the incorporation of capacities has received little attention in the literature.
\citet{Carvalho2018} propose single-level and bilevel ILP formulations for both PDS and CPDS .
However, their model admits an additional propagation rule that allows a vertex to propagate to itself whenever all of its neighbors are already monitored.
To capture the capacity requirements, the authors introduce new variables associated with the graph edges in order to control the individual application of the 
domination rule.
From a computational perspective, this approach exhibits limited scalability, as the two largest instances solved contain 300 and 2007 vertices.

For these reasons, this work aims to advance the development of exact algorithms for CPDS, with a strong focus on practical applicability.
To this end, we introduce the notion of \emph{forbidden propagation sets}, a novel combinatorial structure grouping propagations that cannot be applied together because they create cyclic temporal precedences.
This concept enables ILP formulations that combine some infection-based variables with exponentially many constraints, as in fort-based models, while avoiding big-$M$ constraints that can severely weaken linear relaxation bounds.
To our knowledge, this approach has not previously been considered in the PDS literature or related propagation problems, such as the zero forcing problem \citep{Bozeman2019}. 

Computational experiments show that the proposed approach outperforms existing ILP formulations adapted from the literature.
Some preliminary results of this research were presented in a short work \citep{Lucci2025}.
However, new structural insights enable for more tightened formulations and competitive algorithms.
This work is organized as follows.
Section \ref{sec.cpds} formalizes the CPDS and introduces preliminary notation and definitions.
Section \ref{sec.fps} defines forbidden propagation sets and some structural properties.
Section \ref{sec.fpsip} presents ILP formulations for CPDS and develops an efficient lazy-separation procedure.
Section \ref{sec.formulations.from.literature} adapts existing ILP formulations from the PDS literature to CPDS.
Section \ref{sec.experiments} reports and analyzes the computational results.
Finally, Section \ref{sec.conclusions.future.work} exhibits the conclusions and future work.
The main contributions of this research can be summarized as follows.

\begin{itemize}
\item We introduce \emph{forbidden propagation sets}, a novel concept in the power dominating set literature that captures infeasible combinations of propagation rules through cyclic temporal precedences.
\item We propose a new class of ILP formulations that rely on infection-based variables, but replace traditional big-$M$ constraints with an exponential family of inequalities associated with forbidden propagation sets.
\item We design a polynomial-time lazy-constraint separation that takes advantage of the derived structural results.
\item Computational experiments on benchmark instances for the capacitated power dominating set problem demonstrate the effectiveness of the proposed approach, compared to other ILP formulations adapted from the literature.
\end{itemize}

\section{Capacitated power dominating set problem}\label{sec.cpds}

In this work, we use standard definitions and notation from graph theory.
Given a vertex $v$, $\delta(v)$ denotes its degree, $N(v)$ its set of neighbors, and $N[v] = N(v) \cup \{v\}$.
In a digraph $D$, a (simple) cycle $C$ is defined as a sequence $((v_1,v_2), (v_2,v_3), \ldots, (v_n,v_1))$ of arcs of $D$, where $v_i \neq v_j$ for all $i,j \in \{1,\dots,n\}$. In this case, the cycle has length $|C|=n$ and we may refer to it as an $n$-cycle.
We say that an arc $(v_i,v_j) \in C$ if $(v_i,v_j)$ appears in the sequence that defines $C$.
A chord of $C$ is any arc of $D$ that is not in $C$ and whose endpoints are both vertices of $C$.
We denote by $\CC(D)$ the set of cycles of $D$.

Let $G=(V,E)$ be the graph that represents the electric power system, where the vertices are the buses and the edges are the transmission lines. 
Let $V_P \subseteq V$ denote the subset of zero-injection vertices.
The PMU locations are represented by a set $S \subseteq V$.
To encode the decisions arising from the channel limitation, we introduce a function $\rho: S \to \PP(V)$ such that, for each $v \in S$, $\rho(v) \subseteq N(v)$.
Since PMU locations can be recovered from the domain of $\rho$, we usually denote $S_{\rho} = \text{Dom}(\rho)$.
Monitoring obeys the following two rules.
Each $u \in S_{\rho}$ monitors itself and all neighbors in $\rho(u)$.
Moreover, whenever a monitored vertex $u$ has at least $\delta(u) - 1$ monitored neighbors, its remaining neighbor also becomes monitored, and this rule can be applied iteratively.
Formally, we define the \emph{monitored set} $M(\rho)$ of $\rho$ as the smallest subset of $V$ satisfying:
\begin{description}
    \item[Domination rule (DR).] If $u \in S_{\rho}$ and $v \in (\rho(u) \cup \{u\})$, then $v \in M(\rho)$.
    \item[Propagation rule (PR).] If $u \in V_P \cap M(\rho)$, $v \in N(u)$ and $N(u) \setminus \{v\} \subseteq M(\rho)$, then $v \in M(\rho)$.
\end{description}
Let $k \in \ZZ_+$ denote the number of channels available (the capacity) in each PMU.
When $|\rho(v)| \leq k$ for all $v \in S_{\rho}$, $\rho$ is referred to as a \emph{$k$-capacitated function} of $(G,V_P)$. If in addition, $M(\rho) = V$, then we say that $\rho$ is a \emph{$k$-capacitated power dominating function}.

Figure \ref{fig.pds} illustrates the step-by-step calculation of the monitored set $M(\rho)$ for $\rho$ with $S_{\rho} = \{a\}$ and $\rho(a) = \{b,d\}$, where rectangular vertices have the zero-injection property.
In Figure \ref{fig.pds1}, the domination rule (DR) is applied at $a$ to monitor $a$, $b$, and $d$.
At this stage, $a$ and $d$ are monitored and each has exactly one unmonitored neighbor, so the propagation rule (PR) can be applied at both vertices.
In contrast, this rule cannot be applied at $b$, because it has two unmonitored neighbors, nor at $c$, since it is unmonitored.
In Figure \ref{fig.pds2}, the propagation rule (PR) is applied at $a$ to monitor $e$, and at $d$ to monitor $c$.
It is worth noting that both rules (DR) and (PR) are applied at $a$, a behavior that does not arise in the classical PDS.
At this point, $b$ and $c$ become available for the application of the propagation rule (PR), highlighting the temporal aspect inherent to the sequential application of the rules.
In Figure \ref{fig.pds3}, the propagation rule (PR) is applied at $b$ to monitor $f$ and at $c$ to monitor $g$, resulting $M(\rho) = V$.
This demonstrates that $\rho$ is a 2-capacitated power dominating function.

\begin{figure}

\begin{subfigure}{0.32\textwidth}
\centering
\resizebox{0.8\linewidth}{!}{
\begin{tikzpicture}[
every node/.style={circle, draw=black, fill=black!5, very thick, minimum size=2mm, inner sep=0},
every path/.style={line width=1pt},
scale=0.8
]
\node[label=left:$a$, fill=Magenta, rectangle] (a) {};
\node[label=left:$b$, above=of a, fill=Magenta, rectangle] (b) {};
\node[label=right:$c$, right=of b, rectangle] (c) {};
\node[label=right:$d$, right=of a, fill=Magenta, rectangle] (d) {};
\node[label=above:$e$, below left= 0.5cm and 0.5cm of a] (e) {};
\node[label=below:$f$, above left= 0.5cm and 0.5cm of b] (f) {};
\node[label=below:$g$, above right= 0.5cm and 0.5cm of c] (g) {};
\draw (a) -- (b) -- (c) -- (d) -- (a);
\draw (a) -- (e);
\draw (c) -- (g);
\draw (b) -- (f);
\draw[ ->, Magenta] (a) to [out=310,in=260,looseness=15] (a);
\draw[ ->, Magenta] (a) to [bend left=35] (b);
\draw[ ->, Magenta] (a) to [bend right=35] (d);
\end{tikzpicture}
}
\caption{}
\label{fig.pds1}
\end{subfigure}
\hfill
\begin{subfigure}{0.32\textwidth}
\centering
\resizebox{0.8\linewidth}{!}{
\begin{tikzpicture}[
every node/.style={circle, draw=black, fill=black!5, very thick, minimum size=2mm, inner sep=0},
every path/.style={line width=1pt},
scale=0.8
]
\node[label=left:$a$, fill=Magenta, rectangle] (a) {};
\node[label=left:$b$, above=of a, fill=Magenta, rectangle] (b) {};
\node[label=right:$c$, right=of b, fill=Cyan, rectangle] (c) {};
\node[label=right:$d$, right=of a, fill=Magenta, rectangle] (d) {};
\node[label=above:$e$, below left= 0.5cm and 0.5cm of a, fill=Cyan] (e) {};
\node[label=below:$f$, above left= 0.5cm and 0.5cm of b] (f) {};
\node[label=below:$g$, above right= 0.5cm and 0.5cm of c] (g) {};
\draw (a) -- (b) -- (c) -- (d) -- (a);
\draw (a) -- (e);
\draw (c) -- (g);
\draw (b) -- (f);
\draw[ ->, Cyan] (a) to [bend left=35] (e);
\draw[ ->, Cyan] (d) to [bend right=35] (c);
\end{tikzpicture}
}
\caption{}
\label{fig.pds2}
\end{subfigure}
\hfill
\begin{subfigure}{0.32\textwidth}
\centering
\resizebox{0.8\linewidth}{!}{
\begin{tikzpicture}[
every node/.style={circle, draw=black, fill=black!5, very thick, minimum size=2mm, inner sep=0},
every path/.style={line width=1pt},
scale=0.8
]
\node[label=left:$a$, fill=Magenta, rectangle] (a) {};
\node[label=left:$b$, above=of a, fill=Magenta, rectangle] (b) {};
\node[label=right:$c$, right=of b, fill=Cyan, rectangle] (c) {};
\node[label=right:$d$, right=of a, fill=Magenta, rectangle] (d) {};
\node[label=above:$e$, below left= 0.5cm and 0.5cm of a, fill=Cyan] (e) {};
\node[label=below:$f$, above left= 0.5cm and 0.5cm of b, fill=Cyan] (f) {};
\node[label=below:$g$, above right= 0.5cm and 0.5cm of c, fill=Cyan] (g) {};
\draw (a) -- (b) -- (c) -- (d) -- (a);
\draw (a) -- (e);
\draw (c) -- (g);
\draw (b) -- (f);
\draw[ ->, Cyan] (b) to [bend right=35] (f);
\draw[ ->, Cyan] (c) to [bend left=35] (g);
\end{tikzpicture}
}
\caption{}
\label{fig.pds3}
\end{subfigure}

\caption{Example of the step-by-step calculation of the monitored set. Colors indicate the rule applied: magenta for the domination rule (DR) and cyan for the propagation rule (PR).}
\label{fig.pds}
\end{figure}

This work focuses on the following optimization problem.

\medskip

\noindent
\textbf{Capacitated Power Dominating Set problem (CPDS)}\\
\textbf{Instance:} A graph $G = (V,E)$,  a subset $V_P \subseteq V$, and an integer $k \in \ZZ_+$.\\
\textbf{Goal:} Find a $k$-capacitated power dominating function $\rho$ of $(G,V_P)$ such that $|S_{\rho}|$ is minimum.

\medskip

In particular, CPDS restricted to instances where $k$ is at least the maximum degree of $G$ is equivalent to PDS, where the goal reduces to finding the set $S \subseteq V$ and defining $\rho(u) = N(u)$ for every $u \in S$.
In this setting, a set $S$ such that $M(\rho) = V$ is known as a \emph{power dominating set}.
Conversely, its restriction to instances with $k = 0$ is equivalent to the zero-forcing problem \citep{Bozeman2019}.
Consequently, CPDS is $\NP$-hard.
Since CPDS can be solved independently on each connected component of $G$, it suffices to consider instances where $G$ is connected.

\subsection{Propagations and precedence digraph}

Given a $k$-capacitated function $\rho$, a \emph{calculation} of $M(\rho)$ consists of a finite sequence of applications of rules (DR) and (PR) in a specified order such that, at the end, neither rule can be further applied to monitor any remaining unmonitored vertices.
A calculation of $M(\rho)$ is \emph{proper} when each applied rule increases the size of the monitored set by exactly one, i.e., when the calculation has no \emph{redundant} applications of the rules.

The propagation rule (PR) can potentially be applied at any vertex $u \in V_P$ to monitor a neighbor $v \in N(u)$, provided that the preconditions of the rule are satisfied.
We represent each such possibility by the pair $(u,v)$, referred to as a \emph{propagation}, more precisely, an \emph{outgoing} propagation from $u$ and an \emph{incoming} propagation to $v$. 
For convenience, we denote by $A_P = \{(u,v): u \in V_P,\ v \in N(u)\}$ the set of all potential propagations.
For example, the set $A_P$ associated with the graph in Figure \ref{fig.pds} is given by the cyan propagations shown in the left graph of Figure \ref{fig.digraph.left}.
The propagations actually applied in the calculation of $M(\rho)$ form a subset of $A_P$; for instance, those used in the example in Figure \ref{fig.pds} correspond to the cyan propagations shown in the left graph of Figure \ref{fig.digraph.right}.

\begin{figure}

\begin{subfigure}{0.49\textwidth}
\centering
\resizebox{0.49\linewidth}{!}{
\begin{tikzpicture}[
every node/.style={circle, draw=black, fill=black!5, very thick, minimum size=2mm, inner sep=0},
every path/.style={line width=1pt},
scale=0.8
]
\node[label=left:$a$, rectangle] (a) {};
\node[label=left:$b$, above=of a, rectangle] (b) {};
\node[label=right:$c$, right=of b, rectangle] (c) {};
\node[label=right:$d$, right=of a, rectangle] (d) {};
\node[label=left:$e$, below left= 0.5cm and 0.5cm of a] (e) {};
\node[label=left:$f$, above left= 0.5cm and 0.5cm of b] (f) {};
\node[label=right:$g$, above right= 0.5cm and 0.5cm of c] (g) {};
\draw (a) -- (b) -- (c) -- (d) -- (a);
\draw (a) -- (e);
\draw (c) -- (g);
\draw (b) -- (f);
\draw[ ->, Cyan] (a) to [bend left=25] (b);
\draw[ ->, Cyan] (b) to [bend left=25] (a);
\draw[ ->, Cyan] (b) to [bend left=25] (c);
\draw[ ->, Cyan] (c) to [bend left=25] (b);
\draw[ ->, Cyan] (c) to [bend left=25] (d);
\draw[ ->, Cyan] (d) to [bend left=25] (c);
\draw[ ->, Cyan] (d) to [bend left=25] (a);
\draw[ ->, Cyan] (a) to [bend left=25] (d);
\draw[ ->, Cyan] (a) to [bend left=35] (e);
\draw[ ->, Cyan] (b) to [bend right=35] (f);
\draw[ ->, Cyan] (c) to [bend left=35] (g);
\end{tikzpicture}
}
\resizebox{0.49\linewidth}{!}{
\begin{tikzpicture}[
every node/.style={circle, draw=black, fill=black!5, very thick, minimum size=2mm, inner sep=0},
every path/.style={line width=1pt},
scale=0.8
]
\node[label=left:$a$] (a) {};
\node[label=left:$b$, above=of a] (b) {};
\node[label=right:$c$, right=of b] (c) {};
\node[label=right:$d$, right=of a] (d) {};
\node[label=left:$e$, below left= 0.5cm and 0.5cm of a] (e) {};
\node[label=left:$f$, above left= 0.5cm and 0.5cm of b] (f) {};
\node[label=right:$g$, above right= 0.5cm and 0.5cm of c] (g) {};
\draw[ ->] (a) to (e);
\draw[<->] (b) to [bend right=35] (e);
\draw[<->] (d) to [bend left=35] (e);
\draw[ ->] (b) to (f);
\draw[<->] (a) to [bend left=35] (f);
\draw[<->] (c) to [bend right=35] (f);
\draw[ ->] (c) to (g);
\draw[<->] (d) to [bend right=35] (g);
\draw[<->] (b) to [bend left=35] (g);

\draw[<->] (a) to (b);
\draw[ ->] (d) to (b);
\draw[<->] (c) to (a);
\draw[<->] (b) to (c);
\draw[<->] (c) to (d);
\draw[ ->] (b) to (d);
\draw[<->] (a) to (d);
\end{tikzpicture}
}
\caption{}
\label{fig.digraph.left}
\end{subfigure}
\hfill
\begin{subfigure}{0.49\textwidth}
\centering
\resizebox{0.49\linewidth}{!}{
\begin{tikzpicture}[
every node/.style={circle, draw=black, fill=black!5, very thick, minimum size=2mm, inner sep=0},
every path/.style={line width=1pt},
scale=0.8
]
\node[label=left:$a$, rectangle] (a) {};
\node[label=left:$b$, above=of a, rectangle] (b) {};
\node[label=right:$c$, right=of b, rectangle] (c) {};
\node[label=right:$d$, right=of a, rectangle] (d) {};
\node[label=left:$e$, below left= 0.5cm and 0.5cm of a] (e) {};
\node[label=left:$f$, above left= 0.5cm and 0.5cm of b] (f) {};
\node[label=right:$g$, above right= 0.5cm and 0.5cm of c] (g) {};
\draw (a) -- (b) -- (c) -- (d) -- (a);
\draw (a) -- (e);
\draw (c) -- (g);
\draw (b) -- (f);
\draw[ ->, Cyan] (a) to [bend left=35] (e);
\draw[ ->, Cyan] (d) to [bend right=35] (c);
\draw[ ->, Cyan] (b) to [bend right=35] (f);
\draw[ ->, Cyan] (c) to [bend left=35] (g);
\end{tikzpicture}
}
\resizebox{0.49\linewidth}{!}{
\begin{tikzpicture}[
every node/.style={circle, draw=black, fill=black!5, very thick, minimum size=2mm, inner sep=0},
every path/.style={line width=1pt},
scale=0.8
]
\node[label=left:$a$] (a) {};
\node[label=left:$b$, above=of a] (b) {};
\node[label=right:$c$, right=of b] (c) {};
\node[label=right:$d$, right=of a] (d) {};
\node[label=left:$e$, below left= 0.5cm and 0.5cm of a] (e) {};
\node[label=left:$f$, above left= 0.5cm and 0.5cm of b] (f) {};
\node[label=right:$g$, above right= 0.5cm and 0.5cm of c] (g) {};
\draw[ ->] (a) to (e);
\draw[ ->] (d) to [bend left=35] (e);
\draw[ ->] (b) to [bend right=35] (e);
\draw[ ->] (d) to (c);
\draw[ ->] (a) to (c);
\draw[ ->] (b) to (f);
\draw[ ->] (c) to [bend right=35] (f);
\draw[ ->] (a) to [bend left=35] (f);
\draw[ ->] (c) to (g);
\draw[ ->] (d) to [bend right=35] (g);
\draw[ ->] (b) to [bend left=35] (g);
\end{tikzpicture}
}
\caption{}
\label{fig.digraph.right}
\end{subfigure}

\caption{Two pairs of propagation sets (in cyan) and their corresponding precedence digraphs. The leftmost one is the precedence digraph $D$ associated with $A_P$.}
\label{fig.digraph}
\end{figure}

Applying the propagation rule (PR) imposes temporal precedences, requiring that certain vertices be monitored before others.
For example, in Figure \ref{fig.pds2}, the propagation $(a,e)$ requires vertices $a,b,$ and $d$ to already be monitored.
Specifically, a propagation $(u,v) \in A_P$ can be applied only if every vertex $w \in N[u] \setminus \{v\}$ is monitored before $v$.
Such a requirement is represented by the pair $(w,v)$, which we refer to as a \emph{precedence}.
Observe that all precedences arising from a given propagation share the target vertex with that propagation.

The connection between propagations and precedences is formalized through the following mapping.
Let $\psi$ be the function that assigns to each propagation $(u,v) \in A_P$ the set of precedences imposed by its application, formally $\psi(u,v) = \{(w,v): w \in N[u] \setminus \{v\}\}$.

Given $R \subseteq A_P$, define $\psi(R) = \bigcup_{(u,v) \in R} \psi(u,v)$.
The \emph{precedence digraph} imposed by the propagations in $R$ is then $D_R = (V, \psi(R))$.
For simplicity, we write $D_{A_P} = D$. 
Note that $D$ denotes the precedence digraph associated with all potential propagations in $A_P$, whereas $D_R$ denotes the digraph associated with the subset $R$ of propagations.
Clearly, each $D_R$ is a subgraph of $D$.
For example, the digraph on the right in Figure \ref{fig.digraph.left} illustrates the precedence digraph $D$ associated with $A_P$, where pairs of arcs in opposite directions are represented by a double-headed arrow.
In contrast, the digraph on the right in Figure \ref{fig.digraph.right} shows the precedence digraph $D_R$ corresponding to the subset $R$ of propagations actually applied in the calculation of $M(\rho)$ in the example of Figure \ref{fig.pds}.

For convenience, we also define an inverse mapping of $\psi$.
Let $\varphi$ be the function that assigns to each precedence $(w,v) \in \psi(A_P)$ (arc of $D$) the set of propagations of $A_P$ that impose such a precedence, i.e., $\varphi(w,v) = \{(u,v) \in A_P: w \in N[u] \setminus \{v\} \}$.
Observe that all propagations imposing a given precedence share the target vertex with that precedence.
For example, in Figure \ref{fig.digraph.left}, $\varphi(a,b) = \{(a,b)\}$, $\varphi(b,d) = \{(a,d),(c,d)\}$.
For every propagation $p \in A_P$ and precedence $e \in \psi(A_P)$, $e \in \psi(p)$ if and only if $p \in \varphi(e)$.

In the following remark, we collect some preliminary results and introduce additional notation that will be used frequently.
\begin{remark}\label{remark.preliminary.results}
Let $R \subseteq A_P$.
For every arc $e$ of $D_R$, the definition of $D_R$ implies that $e$ is imposed by some propagation in $R$, i.e. $\varphi(e) \cap R \neq \emptyset$.
Furthermore, for any two arcs $e$ and $e'$ of $D_R$ with distinct target vertices, it holds that $\varphi(e) \cap \varphi(e') = \emptyset$, since the propagations in $\varphi(e)$ (resp. in $\varphi(e')$) have the same target vertex as $e$ (resp. as $e'$).

\noindent
We say that $R$ has no \emph{redundant incoming propagations} if no vertex of $V$ receives more than one incoming propagation from $R$.
In that case, for every arc $e$ of $D_R$, $|\varphi(e) \cap R| = 1$, since any two propagations in this set would share the same target vertex.
We denote this unique propagation by $\varphi_R(e)$.
\end{remark}

\section{Forbidden propagation sets}\label{sec.fps}

We introduce a novel combinatorial structure, referred to as \emph{forbidden propagation sets}, representing sets of propagations that cannot be applied simultaneously in a proper calculation of the monitored set.
The definition of these structures is motivated by the following result.

\begin{lemma}\label{lemma.DAG}
Let $\rho$ be a $k$-capacitated function.
If $R \subseteq A_P$ is the set of propagations applied by rule (PR) in a proper calculation of $M(\rho)$, then the precedence digraph $D_R$ contains no cycles.
\end{lemma}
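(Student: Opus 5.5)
The plan is to build a timestamp function from the proper calculation and show that it strictly increases along every arc of $D_R$. A digraph admitting such a function has no cycles.

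Fix a proper calculation of $M(\rho)$ and number its rule applications $1,2,\dots,m$ in the order they are performed. Since the calculation is proper, each application monitors exactly one new vertex. For each $x \in M(\rho)$ let $t(x)$ be the index of the unique application that first monitors $x$. Vertices outside $M(\rho)$ receive no timestamp, so we first check that they cannot appear in $D_R$.

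\emph{Arcs of $D_R$ join monitored vertices.} Take an arc $(w,v)$ of $D_R$. By Remark~\ref{remark.preliminary.results}, some $(u,v) \in R$ satisfies $w \in N[u]\setminus\{v\}$. Because $(u,v)$ was applied by (PR) in the calculation, $v$ is monitored, and all of $N[u]\setminus\{v\}$ was monitored at that moment. Hence $w \in M(\rho)$, and both $t(w)$ and $t(v)$ are defined.

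\emph{Timestamps increase along arcs.} Let $j$ be the step at which $(u,v)$ is applied. The preconditions of (PR) give that $w$ is monitored before step $j$, so $t(w) < j$. Properness says step $j$ monitors a vertex not previously monitored, and (PR) at $(u,v)$ monitors only $v$. Therefore $v$ is new at step $j$ and $t(v) = j$. Together these give $t(w) < t(v)$ for every arc $(w,v)$ of $D_R$.

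\emph{Conclusion.} Suppose $D_R$ contained a cycle $((v_1,v_2),\dots,(v_n,v_1))$. Chaining the inequality around it gives $t(v_1) < t(v_2) < \dots < t(v_n) < t(v_1)$, which is a contradiction.

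The delicate step is the equality $t(v) = j$. Without properness, $v$ could already have been monitored earlier, for instance by (DR) or by another propagation. In that case $t(v) \le t(w)$ is possible, and the precedence induced by the redundant propagation $(u,v)$ might close a cycle. Properness is exactly what excludes this, so the argument must invoke it explicitly at that point.
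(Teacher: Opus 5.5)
Your proof is correct and follows essentially the same argument as the paper: properness forces the vertex $v$ to be first monitored exactly when its incoming propagation is applied, while the (PR) preconditions force every tail $w \in N[u]\setminus\{v\}$ to be monitored strictly earlier, so any cycle in $D_R$ yields a cyclic temporal contradiction. The paper orders the unique propagations $\varphi_R(e_i)$ around the cycle, whereas you put timestamps on vertices. This lets you skip the uniqueness step of Remark~\ref{remark.preliminary.results}, but the two routes are otherwise the same.
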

\begin{proof}
Let $R \subseteq A_P$ be the set of propagations applied by rule (PR) in a proper calculation of $M(\rho)$.
Suppose that $D_R$ contains a cycle $C = (e_1, e_2, \dots, e_r)$.
Since the calculation of $M(\rho)$ is proper, it follows that $R$ has no redundant incoming propagations.
Hence, by Remark \ref{remark.preliminary.results}, each arc $e_i = (v_i, v_{i+1}) \in C$ is imposed by a unique propagation in $R$, namely $p_i = \varphi_R(e_i)$, with target $v_{i+1}$, and this propagation does not impose any other arc of $C$.

When $p_i$ is applied following the order of the calculation of $M(\rho)$, vertex $v_i$ has already been monitored.
Since the calculation of $M(\rho)$ is proper, monitoring $v_i$ is possible only if $p_{i-1}$ (indices taken modulo $r$), which is the propagation of $R$ that monitors $v_i$, is applied before $p_i$.
As this holds for all $i \in \{1,\ldots,r\}$, a cyclic contradiction follows.
\end{proof}

Lemma \ref{lemma.DAG} motivates defining forbidden structures for sets of propagations in a proper computation of $M(\rho)$.

\begin{definition}\label{def.FPS}
A subset $F \subseteq A_P$ is called a \emph{forbidden propagation set} (FPS) if the precedence digraph $D_F$ contains at least one  cycle, i.e. $\CC(D_F) \neq \emptyset$.
\end{definition}

The next key result follows directly and serves as the motivation for the ILP formulations introduced in Section \ref{sec.fpsip}.

\begin{remark}\label{remark.fps}
Let $\rho$ be a $k$-capacitated function and $F$ be an FPS.
Any proper calculation of $M(\rho)$ applies at most $|F| - 1$ propagations of $F$.
\end{remark}

Observe that if $F$ is an FPS, then every $\tilde F \subseteq A_P$ with $F \subseteq \tilde F$ is also an FPS, 
since every cycle in $\CC(D_F)$ also belongs to $\CC(D_{\tilde F})$.
Two distinct notions of minimality related to FPSs naturally arise.

\begin{definition}\label{def.minimal.FPS}
~ 
\begin{enumerate}
    \item Let $C \in \CC(D)$. We say that $F \subseteq A_P$ is an FPS that \emph{minimally imposes} $C$ if $C \in \CC(D_F)$ and, for every $p \in F$, $C \notin \CC(D_{F \setminus \{p\}})$. 
    \item We say that $F \subseteq A_P$ is a \emph{minimal FPS} if $F$ is an FPS and, for every $p \in F$, $F \setminus \{p\}$ is no longer an FPS.
\end{enumerate}
\end{definition}

It is immediate that if $F$ is a minimal FPS, then $F$ minimally imposes every $C \in \CC(D_F)$.
However, the converse does not necessarily hold, as illustrated in the following example.
Considering the graph in Figure \ref{fig.pds}, Figure \ref{fig.fps.1} depicts a set $F_1 = \{(a,b),(b,c),(c,d)\}$, which is an FPS that minimally imposes the cycle $C_1 = ((d,b),(b,c),(c,d))$, since $C_1 \in \CC(D_{F_1})$ and, for every $p \in F_1$, $F_1 \setminus \{p\}$ no longer imposes one of the arcs of $C_1$.
Nevertheless, $F_1$ is not a minimal FPS, because the set $F_2 = F_1 \setminus \{(b,c)\}$, depicted in Figure \ref{fig.fps.2}, imposes the cycle $C_2 = \{(d,b),(b,d)\}$, and hence is an FPS.
In fact, it is immediate that $F_2$ is a minimal FPS.

\begin{figure}

\begin{subfigure}{0.49\textwidth}
\centering
\resizebox{0.49\linewidth}{!}{
\begin{tikzpicture}[
every node/.style={circle, draw=black, fill=black!5, very thick, minimum size=2mm, inner sep=0},
every path/.style={line width=1pt},
scale=0.8
]
\node[label=left:$a$, rectangle] (a) {};
\node[label=left:$b$, above=of a, rectangle] (b) {};
\node[label=right:$c$, right=of b, rectangle] (c) {};
\node[label=right:$d$, right=of a, rectangle] (d) {};
\node[label=left:$e$, below left= 0.5cm and 0.5cm of a] (e) {};
\node[label=left:$f$, above left= 0.5cm and 0.5cm of b] (f) {};
\node[label=right:$g$, above right= 0.5cm and 0.5cm of c] (g) {};
\draw (a) -- (b) -- (c) -- (d) -- (a);
\draw (a) -- (e);
\draw (c) -- (g);
\draw (b) -- (f);
\draw[ ->, Cyan] (a) to [bend left=35] (b);
\draw[ ->, Cyan] (c) to [bend left=35] (d);
\draw[ ->, Cyan] (b) to [bend left=35] (c);
\end{tikzpicture}
}
\resizebox{0.49\linewidth}{!}{
\begin{tikzpicture}[
every node/.style={circle, draw=black, fill=black!5, very thick, minimum size=2mm, inner sep=0},
every path/.style={line width=1pt},
scale=0.8
]
\node[label=left:$a$] (a) {};
\node[label=left:$b$, above=of a] (b) {};
\node[label=right:$c$, right=of b] (c) {};
\node[label=right:$d$, right=of a] (d) {};
\node[label=left:$e$, below left= 0.5cm and 0.5cm of a] (e) {};
\node[label=left:$f$, above left= 0.5cm and 0.5cm of b] (f) {};
\node[label=right:$g$, above right= 0.5cm and 0.5cm of c] (g) {};
\draw[ ->] (a) to (b);
\draw[ ->] (e) to [bend left=35] (b);
\draw[ ->, Magenta] (d) to [out=170, in=315] (b);
\draw[ ->, Magenta] (c) to (d);
\draw[ ->] (b) to [out=350, in=130] (d);
\draw[ ->] (g) to [bend left=35] (d);
\draw[ ->, Magenta] (b) to (c);
\draw[ ->] (a) to (c);
\draw[ ->] (f) to [bend left=35] (c);
\end{tikzpicture}
}
\caption{}
\label{fig.fps.1}
\end{subfigure}
\hfill
\begin{subfigure}{0.49\textwidth}
\centering
\resizebox{0.49\linewidth}{!}{
\begin{tikzpicture}[
every node/.style={circle, draw=black, fill=black!5, very thick, minimum size=2mm, inner sep=0},
every path/.style={line width=1pt},
scale=0.8
]
\node[label=left:$a$, rectangle] (a) {};
\node[label=left:$b$, above=of a, rectangle] (b) {};
\node[label=right:$c$, right=of b, rectangle] (c) {};
\node[label=right:$d$, right=of a, rectangle] (d) {};
\node[label=left:$e$, below left= 0.5cm and 0.5cm of a] (e) {};
\node[label=left:$f$, above left= 0.5cm and 0.5cm of b] (f) {};
\node[label=right:$g$, above right= 0.5cm and 0.5cm of c] (g) {};
\draw (a) -- (b) -- (c) -- (d) -- (a);
\draw (a) -- (e);
\draw (c) -- (g);
\draw (b) -- (f);
\draw[ ->, Cyan] (a) to [bend left=35] (b);
\draw[ ->, Cyan] (c) to [bend left=35] (d);
\end{tikzpicture}
}
\resizebox{0.49\linewidth}{!}{
\begin{tikzpicture}[
every node/.style={circle, draw=black, fill=black!5, very thick, minimum size=2mm, inner sep=0},
every path/.style={line width=1pt},
scale=0.8
]
\node[label=left:$a$] (a) {};
\node[label=left:$b$, above=of a] (b) {};
\node[label=right:$c$, right=of b] (c) {};
\node[label=right:$d$, right=of a] (d) {};
\node[label=left:$e$, below left= 0.5cm and 0.5cm of a] (e) {};
\node[label=left:$f$, above left= 0.5cm and 0.5cm of b] (f) {};
\node[label=right:$g$, above right= 0.5cm and 0.5cm of c] (g) {};
\draw[ ->] (a) to (b);
\draw[ ->] (e) to [bend left=35] (b);
\draw[ ->, Magenta] (d) to [out=170, in=315] (b);
\draw[ ->] (c) to (d);
\draw[ ->, Magenta] (b) to [out=350, in=130] (d);
\draw[ ->] (g) to [bend left=35] (d);
\end{tikzpicture}
}
\caption{}
\label{fig.fps.2}
\end{subfigure}

\caption{Two pairs of FPSs (in cyan) and their precedence digraphs: (a) an FPS that minimally imposes the magenta cycle; (b) a minimal FPS properly contained in (a).}
\label{fig.fps}
\end{figure}

\subsection{Minimality in forbidden propagation sets}

We now focus on characterizing FPSs that minimally impose a given cycle, as well as minimal FPSs. Let us first present some intermediate results.

\begin{remark}\label{remark.FPS.minimally.impose.cycle}
Let $C \in \CC(D)$. If $F \subseteq A_P$ is an FPS that minimally imposes $C$, then, for all $p \in F$, there exists a precedence (arc) $e \in C$ imposed by $p$ and not imposed by any other propagation in $F$.
Formally, this means that $e \in \psi(p)$ and $e \notin \psi(p')$ for all $p' \in F \setminus \{p\}$, or equivalently, $\varphi(e) \cap F = \{p\}$. 
\end{remark}

We show that these minimal structures are free of redundancy.

\begin{lemma}\label{lemma.minimally.impose.implies.no.redundancy}
    Let $C \in \CC(D)$.
    If $F \subseteq A_P$ is an FPS that minimally imposes $C$, then $F$ has no redundant incoming propagations.
\end{lemma}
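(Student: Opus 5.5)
The plan is to argue by contradiction, using only the fact that $C$ is a simple cycle and Remark \ref{remark.FPS.minimally.impose.cycle}. Suppose $F$ minimally imposes $C$ but has redundant incoming propagations. Then there are two distinct propagations $p=(u,v)$ and $p'=(u',v)$ in $F$ with the same target $v$.

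First, by Remark \ref{remark.FPS.minimally.impose.cycle}, applied to $p$, there is an arc $e \in C$ with $\varphi(e)\cap F=\{p\}$. Since every precedence in $\psi(p)$ has target $v$, $e=(w,v)$ for some $w$. Applying the same remark to $p'$ gives an arc $e'=(w',v)\in C$ with $\varphi(e')\cap F=\{p'\}$.

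Next, note that $e\neq e'$. Otherwise $\varphi(e)\cap F$ would contain both $p$ and $p'$, contradicting that it equals $\{p\}$. So $C$ contains two distinct arcs $(w,v)$ and $(w',v)$ entering the same vertex $v$, with $w\neq w'$. This is impossible in a simple cycle: in $C=((v_1,v_2),\dots,(v_n,v_1))$ with pairwise distinct $v_i$, each vertex of $C$ is the head of exactly one arc of $C$. This contradiction shows that no vertex receives two propagations from $F$.

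The only delicate point is making sure the target-sharing argument is airtight. Every arc in $\psi(p)$ has head $v$ by the definition of $\psi$, which forces both witnessing arcs to enter $v$. Their distinctness comes directly from the uniqueness in Remark \ref{remark.FPS.minimally.impose.cycle}. The rest is the in-degree-one property of simple cycles, which follows immediately from the paper's definition of a cycle. I don't anticipate a genuine obstacle.
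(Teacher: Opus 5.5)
Your proof is correct and follows essentially the paper's own argument: use Remark~\ref{remark.FPS.minimally.impose.cycle} to obtain, for each of the two propagations with common target $v$, a witnessing arc of $C$ imposed only by it. Both witnessing arcs must enter $v$, which is incompatible with $C$ having exactly one incoming arc at each vertex. You phrase the contradiction as two distinct arcs entering $v$, whereas the paper concludes $e=e'$ and contradicts $e\in\psi(p)\setminus\psi(p')$; this is only a cosmetic difference.
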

\begin{proof}
Let $F \subseteq A_P$ be an FPS that minimally imposes $C$.
Suppose that $F$ contains two propagations $p$ and $p'$ sharing the same target vertex $v$.
By Remark \ref{remark.FPS.minimally.impose.cycle}, there exist arcs $e, e' \in C$ such that $e \in \psi(p) \setminus \psi(p')$ and $e' \in \psi(p') \setminus \psi(p)$.
By Remark \ref{remark.preliminary.results}, both $e$ and $e'$ have target $v$.
Since all arcs of a cycle have distinct targets, it follows that $e = e'$, which is a contradiction.
\end{proof}

As noted after Definition \ref{def.minimal.FPS}, a minimal FPS minimally imposes every cycle in its precedence digraph. Hence, we obtain the following corollary.

\begin{corollary}\label{corollary.minimal.implies.no.redundancy}
    If $F \subseteq A_P$ is a minimal FPS, then $F$ has no redundant incoming propagations.
\end{corollary}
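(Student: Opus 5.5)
The corollary follows by combining the observation made right after Definition \ref{def.minimal.FPS} with Lemma \ref{lemma.minimally.impose.implies.no.redundancy}. The plan is to exhibit a cycle $C \in \CC(D)$ that $F$ minimally imposes, and then apply the lemma to that cycle.

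\textbf{Step 1: choose a cycle.} Let $F$ be a minimal FPS. By Definition \ref{def.FPS}, $\CC(D_F) \neq \emptyset$, so fix some $C \in \CC(D_F)$. Since $D_F$ is a subgraph of $D$, we also have $C \in \CC(D)$.

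\textbf{Step 2: $F$ minimally imposes $C$.} Suppose, for contradiction, that some $p \in F$ satisfies $C \in \CC(D_{F \setminus \{p\}})$. Then $D_{F\setminus\{p\}}$ contains a cycle, so $F \setminus \{p\}$ is an FPS. This contradicts the minimality of $F$ in item 2 of Definition \ref{def.minimal.FPS}. Hence $C \notin \CC(D_{F\setminus\{p\}})$ for every $p \in F$, which together with $C \in \CC(D_F)$ is exactly item 1 of that definition.

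\textbf{Step 3: conclude.} Apply Lemma \ref{lemma.minimally.impose.implies.no.redundancy} to $C$ and $F$. It gives that $F$ has no redundant incoming propagations.

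No step poses a real obstacle. The only point needing care is that the cycle is taken in $D_F$, which is nonempty by the FPS definition, and then viewed as a cycle of $D$, as the lemma's hypothesis requires.
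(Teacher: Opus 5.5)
Your proposal is correct and follows the paper's own argument: combine the observation after Definition~\ref{def.minimal.FPS} (a minimal FPS minimally imposes every cycle of $D_F$) with Lemma~\ref{lemma.minimally.impose.implies.no.redundancy}. Your Step~2 simply spells out that observation, which the paper treats as immediate.
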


Then, we can prove a characterization of them.

\begin{proposition}\label{prop.minimally.impose}
    Let $F \subseteq A_P$ be an FPS and let $C = (e_1,\dots, e_r) \in \CC(D_F)$.
    For any $\tilde F \subseteq F$, $\tilde F$ is an FPS that minimally imposes $C$ if and only if $\tilde F = \{p_1, \ldots, p_r\}$ where $p_i \in \varphi(e_i)$ for all $i \in \{1,\ldots,r\}$.
    Moreover, $|\tilde F| = |C|$.
\end{proposition}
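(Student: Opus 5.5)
The plan is to prove both directions directly from the definitions, using one structural fact: the arcs of a cycle have pairwise distinct target vertices, and every propagation in $\varphi(e_i)$ has the same target as $e_i$. So if $p_i \in \varphi(e_i)$ and $p_j \in \varphi(e_j)$ with $i \neq j$, then $p_i$ and $p_j$ have different targets. In particular $p_i \neq p_j$, and $p_j$ imposes no arc of $C$ other than $e_j$, because every arc in $\psi(p_j)$ has the target of $p_j$. This is the observation already recorded in Remark \ref{remark.preliminary.results}. It yields the cardinality claim $|\tilde F| = |C|$ at once, as soon as $\tilde F$ has the form $\{p_1,\ldots,p_r\}$.

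For the ``if'' direction, take $\tilde F = \{p_1,\ldots,p_r\} \subseteq F$ with $p_i \in \varphi(e_i)$.
\begin{enumerate}
\item Since $e_i \in \psi(p_i)$ for every $i$, all arcs of $C$ lie in $\psi(\tilde F)$, so $C \in \CC(D_{\tilde F})$ and $\tilde F$ is an FPS.
\item For minimality, remove some $p_j$. By the observation above, no $p_i$ with $i \neq j$ imposes $e_j$, since its target differs from that of $e_j$.
\item Hence $e_j \notin \psi(\tilde F \setminus \{p_j\})$, so $C \notin \CC(D_{\tilde F\setminus\{p_j\}})$.
\end{enumerate}

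For the ``only if'' direction, suppose $\tilde F \subseteq F$ minimally imposes $C$.
\begin{enumerate}
\item Since $C \in \CC(D_{\tilde F})$, every arc $e_i$ belongs to $\psi(\tilde F)$. So for each $i$ we can choose some $p_i \in \varphi(e_i) \cap \tilde F$, as in Remark \ref{remark.preliminary.results}.
\item Let $P = \{p_1,\ldots,p_r\} \subseteq \tilde F$. By construction $P$ still imposes every arc of $C$.
\item If some $p \in \tilde F \setminus P$ existed, then $\tilde F \setminus \{p\} \supseteq P$ would still impose $C$. This contradicts Definition \ref{def.minimal.FPS}(1). Therefore $\tilde F = P$.
\end{enumerate}
Alternatively, Remark \ref{remark.FPS.minimally.impose.cycle} gives every $p \in \tilde F$ an arc $e_i$ with $\varphi(e_i) \cap \tilde F = \{p\}$, which forces $p = p_i$ directly.

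No step is a real obstacle. The only point needing care is the distinctness of the $p_i$ and the fact that no $p_i$ imposes two arcs of $C$. Both rest on the fact that a simple cycle never repeats a target vertex, and I would state this explicitly before starting either direction.
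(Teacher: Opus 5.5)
Your proof is correct. The ``if'' direction matches the paper's argument essentially verbatim: a propagation imposing $e_j$ must share its target, and the remaining $p_i$ do not.

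The ``only if'' direction takes a slightly different and more elementary route. The paper proceeds in three steps:
\begin{enumerate}
\item It invokes Lemma~\ref{lemma.minimally.impose.implies.no.redundancy} to conclude that $\tilde F$ has no redundant incoming propagations. Hence each $\varphi(e)\cap\tilde F$ is a singleton $\{\varphi_{\tilde F}(e)\}$.
\item It defines $H$ as the set of these unique representatives and gets $|H|=|C|$ from the disjointness of the sets $\varphi(e_i)$.
\item It uses Remark~\ref{remark.FPS.minimally.impose.cycle} to show $\tilde F\subseteq H$.
\end{enumerate}

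You instead pick arbitrary representatives $p_i\in\varphi(e_i)\cap\tilde F$. You then argue by monotonicity ($D_P$ is a subgraph of $D_{\tilde F\setminus\{p\}}$ whenever $P\subseteq\tilde F\setminus\{p\}$) that no element outside $P$ can survive minimality. This needs neither the no-redundancy lemma nor the uniqueness of representatives. In fact, it yields Lemma~\ref{lemma.minimally.impose.implies.no.redundancy} as a byproduct, since the $p_i$ have pairwise distinct targets.

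What the paper's route buys is mainly notational: it sets up $\varphi_{\tilde F}(e)$, which is reused in the subsequent lemmas. Your alternative via Remark~\ref{remark.FPS.minimally.impose.cycle} is essentially the paper's final step, again without needing the lemma.
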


\begin{proof}
Let $\tilde F \subseteq F$ be an FPS that minimally imposes $C$.
By Lemma \ref{lemma.minimally.impose.implies.no.redundancy}, $\tilde F$ has no redundant incoming propagations.
Since $\tilde F$ minimally imposes $C$, we have $C \in \CC(D_{\tilde F})$.
By Remark \ref{remark.preliminary.results}, for every $e \in C$, $|\varphi(e) \cap \tilde F| = 1$, and this unique propagation can be denoted by $\varphi_{\tilde F}(e)$.

Let $H = \{\varphi_{\tilde F}(e_i) : e_i \in C\}$, which clearly satisfies $H \subseteq \tilde F$.
In addition, by Remark \ref{remark.preliminary.results}, for all distinct arcs $e_i, e_j \in C$, $\varphi(e_i) \cap \varphi(e_j) = \emptyset$. Then, $|H| = |C|$.
Next, we show that $\tilde F \subseteq H$.
Let $p \in \tilde F$.
By Remark \ref{remark.FPS.minimally.impose.cycle}, there exists $e \in C$ such that $\varphi(e) \cap \tilde F = \{p\}$.
Hence, $p = \varphi_{\tilde F}(e)$, implying that $p \in H$. 

For the converse, let $\tilde F \subseteq F$ such that $\tilde F = \{p_1, \ldots, p_r\}$, where $p_i \in \varphi(e_i)$ for all $i \in \{1,\ldots,r\}$.
It is immediate that $C \in \CC(D_{\tilde F})$.

Let $p_i \in \tilde F$ and suppose that $C \in \CC(D_{\tilde F \setminus \{p_i\}})$. 
Then $e_i$ is imposed by some propagation $p_j \in \tilde F \setminus \{p_i\}$.
Hence, $p_j \in \varphi(e_i) \cap \varphi(e_j)$, and by Remark \ref{remark.preliminary.results}, $e_i$ and $e_j$ share the same target vertex.
Consequently, this vertex has two incoming arcs in $C$, contradicting the fact that $C$ is a cycle.
Therefore, $C \notin \CC(D_{\tilde F \setminus \{p_i\}})$ for all $p_i \in \tilde F$, completing the proof that $\tilde F$ minimally imposes $C$.
\end{proof}

We illustrate the characterization given in Proposition \ref{prop.minimally.impose} with the example in Figure \ref{fig.efps}.
Consider the set $F = \{(a,b),(c,b),(c,d),(a,d)\}$ and the cycle $C = ((b,d),(d,b))$ of $D_F$.
Observe that $\varphi(b,d) = \{(a,d),(c,d)\}$ and $\varphi(d,b) = \{(a,b),(c,b)\}$.
The sets $\tilde F_1 = \{(c,b),(a,d)\}$ and $\tilde F_2 = \{(a,b),(c,d)\}$, illustrated in Figure \ref{fig.efps.left} and Figure \ref{fig.efps.right}, respectively, are 2 of the 4 FPSs that minimally impose $C$.
Indeed, the total number of such structures is determined by the following corollary, which follows from the fact that, for each $e \in C$, we choose exactly one propagation from $\varphi(e) \cap F$.

\begin{figure}

\begin{subfigure}{0.49\textwidth}
\centering
\resizebox{0.49\linewidth}{!}{
\begin{tikzpicture}[
every node/.style={circle, draw=black, fill=black!5, very thick, minimum size=2mm, inner sep=0},
every path/.style={line width=1pt},
scale=0.8
]
\node[label=left:$a$, rectangle] (a) {};
\node[label=left:$b$, above=of a, rectangle] (b) {};
\node[label=right:$c$, right=of b, rectangle] (c) {};
\node[label=right:$d$, right=of a, rectangle] (d) {};
\node[label=left:$e$, below left= 0.5cm and 0.5cm of a] (e) {};
\node[label=left:$f$, above left= 0.5cm and 0.5cm of b] (f) {};
\node[label=right:$g$, above right= 0.5cm and 0.5cm of c] (g) {};
\draw (a) -- (b) -- (c) -- (d) -- (a);
\draw (a) -- (e);
\draw (c) -- (g);
\draw (b) -- (f);
\draw[ ->, Cyan] (a) to [bend right=35] (d);
\draw[ ->, Cyan] (c) to [bend right=35] (b);
\end{tikzpicture}
}
\resizebox{0.49\linewidth}{!}{
\begin{tikzpicture}[
every node/.style={circle, draw=black, fill=black!5, very thick, minimum size=2mm, inner sep=0},
every path/.style={line width=1pt},
scale=0.8
]
\node[label=left:$a$] (a) {};
\node[label=left:$b$, above=of a] (b) {};
\node[label=right:$c$, right=of b] (c) {};
\node[label=right:$d$, right=of a] (d) {};
\node[label=left:$e$, below left= 0.5cm and 0.5cm of a] (e) {};
\node[label=left:$f$, above left= 0.5cm and 0.5cm of b] (f) {};
\node[label=right:$g$, above right= 0.5cm and 0.5cm of c] (g) {};
\draw[ ->] (a) to (d);
\draw[<-, Magenta] (b) to [out=310, in=90] (d);
\draw[ ->] (e) to [bend right=35] (d);
\draw[ ->] (c) to (b);
\draw[<-, Magenta] (d) to [out=130, in=270] (b);
\draw[ ->] (g) to [bend right=35] (b);
\end{tikzpicture}
}
\caption{}
\label{fig.efps.left}
\end{subfigure}
\hfill
\begin{subfigure}{0.49\textwidth}
\centering
\resizebox{0.49\linewidth}{!}{
\begin{tikzpicture}[
every node/.style={circle, draw=black, fill=black!5, very thick, minimum size=2mm, inner sep=0},
every path/.style={line width=1pt},
scale=0.8
]
\node[label=left:$a$, rectangle] (a) {};
\node[label=left:$b$, above=of a, rectangle] (b) {};
\node[label=right:$c$, right=of b, rectangle] (c) {};
\node[label=right:$d$, right=of a, rectangle] (d) {};
\node[label=left:$e$, below left= 0.5cm and 0.5cm of a] (e) {};
\node[label=left:$f$, above left= 0.5cm and 0.5cm of b] (f) {};
\node[label=right:$g$, above right= 0.5cm and 0.5cm of c] (g) {};
\draw (a) -- (b) -- (c) -- (d) -- (a);
\draw (a) -- (e);
\draw (c) -- (g);
\draw (b) -- (f);
\draw[ ->, Cyan] (a) to [bend left=35] (b);
\draw[ ->, Cyan] (c) to [bend left=35] (d);
\end{tikzpicture}
}
\resizebox{0.49\linewidth}{!}{
\begin{tikzpicture}[
every node/.style={circle, draw=black, fill=black!5, very thick, minimum size=2mm, inner sep=0},
every path/.style={line width=1pt},
scale=0.8
]
\node[label=left:$a$] (a) {};
\node[label=left:$b$, above=of a] (b) {};
\node[label=right:$c$, right=of b] (c) {};
\node[label=right:$d$, right=of a] (d) {};
\node[label=left:$e$, below left= 0.5cm and 0.5cm of a] (e) {};
\node[label=left:$f$, above left= 0.5cm and 0.5cm of b] (f) {};
\node[label=right:$g$, above right= 0.5cm and 0.5cm of c] (g) {};
\draw[ ->] (a) to (b);
\draw[ ->] (e) to [bend left=35] (b);
\draw[ ->, Magenta] (d) to [out=180, in=320] (b);
\draw[ ->] (c) to (d);
\draw[ ->, Magenta] (b) to [out=0, in=140] (d);
\draw[ ->] (g) to [bend left=35] (d);
\end{tikzpicture}
}
\caption{}
\label{fig.efps.right}
\end{subfigure}

\caption{Two pairs of propagation sets (in cyan) and their corresponding precedence digraphs, where each propagation set is a minimal FPS imposing the magenta cycle.}
\label{fig.efps}
\end{figure}

\begin{corollary}
    Let $F \subseteq A_P$ be an FPS and let $C = (e_1,\dots, e_r) \in \CC(D_F)$.
    The number of subsets of $F$ that are FPSs that minimally impose $C$ is $\prod_{i = 1}^r |\varphi(e_i) \cap F|$.
\end{corollary}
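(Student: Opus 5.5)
The plan is to set up a bijection between the subsets of $F$ that are FPSs minimally imposing $C$ and the Cartesian product $\prod_{i=1}^r (\varphi(e_i)\cap F)$, and then count the product.

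\textbf{Step 1: the map.} Define $\Phi$ from the product to subsets of $F$ by $\Phi(p_1,\dots,p_r)=\{p_1,\dots,p_r\}$. Since each $p_i\in\varphi(e_i)\cap F\subseteq F$, every image is a subset of $F$ of the form described in Proposition \ref{prop.minimally.impose}. The ``if'' direction of that proposition then shows that each image is an FPS minimally imposing $C$.

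\textbf{Step 2: surjectivity.} Take any $\tilde F\subseteq F$ that is an FPS minimally imposing $C$. The ``only if'' direction of Proposition \ref{prop.minimally.impose} writes $\tilde F=\{p_1,\dots,p_r\}$ with $p_i\in\varphi(e_i)$. Because $\tilde F\subseteq F$, each $p_i$ also lies in $F$, so $(p_1,\dots,p_r)$ belongs to the product and maps to $\tilde F$.

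\textbf{Step 3: injectivity.} This is the only point needing care, since a set does not remember an ordering of its elements. Arcs of a cycle have pairwise distinct targets. By Remark \ref{remark.preliminary.results}, every propagation in $\varphi(e_i)$ has the same target vertex as $e_i$, so the sets $\varphi(e_i)$ are pairwise disjoint.

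Now suppose $\Phi(p_1,\dots,p_r)=\Phi(q_1,\dots,q_r)=\tilde F$. For each $i$, both $p_i$ and $q_i$ lie in $\tilde F\cap\varphi(e_i)$. Any element of $\tilde F$ is some $q_j\in\varphi(e_j)$, and disjointness forces $j=i$, so $\tilde F\cap\varphi(e_i)=\{q_i\}$. Hence $p_i=q_i$ for every $i$. The same disjointness also gives $|\tilde F|=r$, consistent with the proposition.

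\textbf{Step 4: counting.} Since $\Phi$ is a bijection, the number of such subsets equals the size of the product, namely $\prod_{i=1}^r|\varphi(e_i)\cap F|$.

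Each factor is nonzero because $C\in\CC(D_F)$ implies $\varphi(e_i)\cap F\neq\emptyset$, again by Remark \ref{remark.preliminary.results}. This is not needed for the count itself, only to see that the count is positive.
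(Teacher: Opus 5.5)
Your proof is correct and follows the paper's reasoning. The paper derives the count directly from Proposition~\ref{prop.minimally.impose} by noting that each such subset amounts to choosing exactly one propagation from each $\varphi(e_i)\cap F$, and your explicit bijection, with its injectivity check via the pairwise disjointness of the sets $\varphi(e_i)$ (used only implicitly in the paper, inside the proof of the proposition), spells out that argument rigorously.
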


Recall that an FPS that minimally imposes a cycle need not be a minimal FPS.
The next lemma shows that this property holds when the cycle is chordless.

\begin{lemma}\label{lemma.chordless.implies.minimal}
    Let $F \subseteq A_P$ be an FPS. For every chordless cycle $C \in \CC(D_F)$, any FPS $\tilde F \subseteq F$ that minimally imposes $C$, is a minimal FPS.
\end{lemma}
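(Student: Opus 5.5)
By Proposition \ref{prop.minimally.impose}, $\tilde F = \{p_1,\dots,p_r\}$ with $p_i \in \varphi(e_i)$, where $C=(e_1,\dots,e_r)$ and $e_i=(v_i,v_{i+1})$ (indices modulo $r$). The target of $p_i$ is $v_{i+1}$, so the propagations of $\tilde F$ have pairwise distinct targets, namely the vertices of $C$. To show $\tilde F$ is a minimal FPS, I will fix an arbitrary index $j$ and show that $D_{\tilde F\setminus\{p_j\}}$ contains no cycle.

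The key observation concerns which vertices can carry incoming arcs in $D_{\tilde F\setminus\{p_j\}}$. Every arc of $\psi(p_i)$ has target $v_{i+1}$. Hence every arc of $D_{\tilde F\setminus\{p_j\}}$ has its target in $V(C)\setminus\{v_{j+1}\}$. Any cycle $C'$ in this digraph uses only vertices that have an incoming arc in it, so $V(C') \subseteq V(C)\setminus\{v_{j+1}\}$. In particular, both endpoints of every arc of $C'$ lie in $V(C)$.

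Next I use chordlessness. Since $D_{\tilde F\setminus\{p_j\}}$ is a subgraph of $D_F$, every arc of $C'$ is an arc of $D_F$ with both endpoints in $V(C)$. Because $C$ has no chord in $D_F$, every such arc must belong to $C$ itself. So $C'$ is a cycle consisting only of arcs of $C$. The only cycle formed by arcs of a simple cycle $C$ is $C$ itself, since each vertex of $C$ has exactly one outgoing arc in $C$. Therefore $C'=C$. But $C$ passes through $v_{j+1}$, which has no incoming arc in $D_{\tilde F\setminus\{p_j\}}$. This is a contradiction, so $D_{\tilde F\setminus\{p_j\}}$ is acyclic. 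As $j$ was arbitrary, $\tilde F$ is a minimal FPS.

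The main subtlety is the chord step. The definition of chord refers to $D_F$ (or $D$), so I must make sure that "chordless" is applied in a digraph containing $D_{\tilde F\setminus\{p_j\}}$. This holds because $\tilde F\subseteq F$ gives $D_{\tilde F\setminus\{p_j\}}\subseteq D_F$. I also need the fact that a cycle built only from arcs of $C$ equals $C$, which follows because out-degrees within $C$ are all one.
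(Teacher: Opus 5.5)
Your proof is correct and follows essentially the same route as the paper. Both arguments observe that every arc in the precedence digraph of a subset of $\tilde F$ has its target in $V(C)$, so any cycle there lives on $V(C)$, and chordlessness in $D_F$ then forces it to consist of arcs of $C$. The one difference is how $C$ itself is ruled out: you use that $v_{j+1}$ has no incoming arc once $p_j$ is deleted, whereas the paper passes to a minimal FPS $\tilde F' \subsetneq \tilde F$ and compares lengths via $|\tilde C| = |\tilde F'| < |C|$, which requires Lemma~\ref{lemma.minimally.impose.implies.no.redundancy} and Proposition~\ref{prop.minimally.impose} for $\tilde F'$ and which your version avoids.
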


\begin{proof}
Let $C = (e_1, e_2, \dots, e_r)$ be a  chordless cycle of $D_F$ and let $\tilde F \subseteq F$ be an FPS that minimally imposes $C$.
By Proposition \ref{prop.minimally.impose}, we have $\tilde F = \{p_1,\ldots,p_r\}$, where $p_i \in \varphi(e_i)$ for all $i \in \{1,\ldots,r\}$, and $|\tilde F| = |C|$.

Suppose that $\tilde F$ is not a minimal FPS.
Hence, there exists a proper subset $\tilde F'$ of $\tilde F$ such that $\tilde F'$ is a minimal FPS.
By Definition \ref{def.FPS}, the precedence digraph $D_{\tilde F'}$ contains a cycle $\tilde C$.
We show that the existence of $\tilde C$ implies that $C$ has a chord, contradicting the assumption that $C$ is chordless.

Since $\tilde F'$ is a minimal FPS, $\tilde F'$ minimally imposes $\tilde C$.
By Lemma \ref{lemma.minimally.impose.implies.no.redundancy}, $\tilde F'$ has no redundant incoming propagations.
Moreover, it follows from Proposition \ref{prop.minimally.impose} that $|\tilde F'| = |\tilde C|$. Hence $|\tilde C| < |C|$.

We claim that every vertex of $\tilde C$ is also a vertex of $C$.
Let $v$ be a vertex of $\tilde C$.
Since $\tilde C$ is a  cycle, it contains an incoming arc to $v$, say $(w, v) \in \tilde C$.
By Remark \ref{remark.preliminary.results}, $|\varphi(w, v) \cap \tilde F'| = 1$, and let $\varphi_{\tilde F'}(w,v)$ be this unique element.
Because $\tilde F' \subseteq \tilde F$, $\varphi_{\tilde F'}(w, v) = p_k$ for some $p_k \in \tilde F$.
By Remark \ref{remark.preliminary.results}, $p_k \in \varphi(e_k)$ and $\varphi_{\tilde F'}(w, v) = p_k$ imply that $e_k$ has target $v$.
Thus, since $e_k \in C$, $v$ is a vertex of $C$.

Since all vertices of $\tilde C$ belong to $C$ and $|\tilde C| < |C|$, $\tilde C$ contains at least one arc not belonging to $C$. Such an arc is a chord of $C$. 
\end{proof}

In order to establish the converse of Lemma \ref{lemma.chordless.implies.minimal}, we first observe that the precedence digraph associated with any minimal FPS contains exactly one cycle. 

\begin{lemma}\label{lemma.minimal.implies.exactly.one.cycle}
If $\tilde F \subseteq A_P$ is a minimal FPS, then $|\CC(D_{\tilde F})| = 1$.
\end{lemma}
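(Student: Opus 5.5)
Since $\tilde F$ is an FPS, $\CC(D_{\tilde F})\neq\emptyset$, so it suffices to rule out two distinct cycles. The plan is to argue by contradiction, using the rigid structure of minimal FPSs from the preceding results.

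Suppose $C \neq C'$ are two cycles of $D_{\tilde F}$. Because $\tilde F$ is a minimal FPS, it minimally imposes every cycle of $D_{\tilde F}$. Proposition \ref{prop.minimally.impose} then gives $|\tilde F| = |C| = |C'|$. Corollary \ref{corollary.minimal.implies.no.redundancy} says $\tilde F$ has no redundant incoming propagations, so each arc $e$ of $D_{\tilde F}$ has a unique imposing propagation $\varphi_{\tilde F}(e)$. Its target is the target of $e$ (Remark \ref{remark.preliminary.results}), and distinct propagations in $\tilde F$ have distinct targets. The map $e \mapsto \varphi_{\tilde F}(e)$ is therefore injective on the arcs of $C$, since those arcs have distinct targets. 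As $|C| = |\tilde F|$, this map is a bijection from $C$ onto $\tilde F$, and the same holds for $C'$.

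Consequently the vertex set $T$ of targets of $\tilde F$ equals the vertex set of $C$, and also that of $C'$. For each $v \in T$, both $C$ and $C'$ contain exactly one arc entering $v$, and both arcs are imposed by the unique propagation $p_v \in \tilde F$ with target $v$. So $C$ and $C'$ are two Hamiltonian cycles on the same vertex set $T$ inside $D_{\tilde F}$. Since they are distinct, some $v \in T$ has distinct entering arcs $(w,v) \in C$ and $(w',v) \in C'$ with $w \neq w'$.

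The main obstacle is turning this into a proper subset of $\tilde F$ that still has a cycle. The arc $(w',v)$ is a chord of $C$, because $w' \in T$ and $(w',v) \notin C$. Following $C$ from $v$ back to $w'$ and then closing with $(w',v)$ gives a cycle $\hat C$ in $D_{\tilde F}$. It is strictly shorter than $C$, since it skips $w$: the path in $C$ from $v$ to $w'$ reaches $w'$ before $w$, because $w$ is the last vertex before $v$. Now take the propagations $\varphi_{\tilde F}(e)$ for $e \in \hat C$. They form a subset $\hat F \subseteq \tilde F$ with $\hat C \in \CC(D_{\hat F})$, so $\hat F$ is an FPS. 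Its size is $|\hat C| < |\tilde F|$, so $\hat F$ is a proper subset. Removing any element of $\tilde F \setminus \hat F$ from $\tilde F$ therefore leaves an FPS, contradicting minimality. Hence $|\CC(D_{\tilde F})| = 1$.
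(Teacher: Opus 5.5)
Your proposal is correct and follows essentially the paper's argument: both show that the two cycles share the vertex set of targets of $\tilde F$ (you via $|C|=|\tilde F|$ and the target-preserving bijection, the paper via minimality forcing $\tilde F=\tilde F_1=\tilde F_2$), and then use an arc of one cycle as a chord of the other to get a strictly shorter cycle imposed by a proper subset of $\tilde F$. Your justification that the shortcut cycle skips $w$, and is therefore strictly shorter, makes explicit a step the paper only asserts.
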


\begin{proof}
Let $\tilde F \subseteq A_P$ be a minimal FPS.
By Corollary \ref{corollary.minimal.implies.no.redundancy}, $\tilde F$ has no redundant incoming propagations.
Then, by Remark \ref{remark.preliminary.results}, $|\varphi(e) \cap \tilde F| = 1$ for every arc $e$ of $D_{\tilde F}$, and we denote this unique propagation by $\varphi_{\tilde F}(e)$.
By Definition $\ref{def.FPS}$, $\CC(D_{\tilde F}) \neq \emptyset$.

Suppose, for contradiction, that $D_{\tilde F}$ contains two distinct cycles $C_1$ and $C_2$.
For each $i \in \{1,2\}$, define $\tilde F_i = \{\varphi_{\tilde F}(e) : e \in C_i\} \subseteq \tilde F$ and, by Proposition \ref{prop.minimally.impose}, $\tilde F_i$ is an FPS that minimally imposes $C_i$.
Since $\tilde F$ is a minimal FPS, we obtain $\tilde F = \tilde F_1 = \tilde F_2$.
Remark \ref{remark.preliminary.results} ensures that $\varphi$ preserves target vertices; consequently, $C_1$ and $C_2$ have the same vertex set.

As the cycles are distinct, there exists an arc $e = (u,v) \in C_1 \setminus C_2$.
Then $e$ is a chord of $C_2$ and, together with the path in $C_2$ from $v$ to $u$, forms a cycle $\tilde C_2$ in $D_{\tilde F}$ shorter than $C_2$.
Now, define $\tilde F' = \{\varphi_{\tilde F}(e): e \in \tilde C_2\} \subseteq \tilde F$ and, by Proposition \ref{prop.minimally.impose}, $\tilde F'$ is an FPS that minimally imposes $\tilde C_2$.
Observe that $|\tilde F'| = |\tilde C_2| < |C_2| = |\tilde F_2| = |\tilde F|$.
Therefore, $\tilde F'$ is an FPS and proper subset of $\tilde F$, contradicting the minimality of $\tilde F$.
\end{proof}

As a consequence of Lemma \ref{lemma.minimal.implies.exactly.one.cycle}, the unique cycle of $D_{\tilde F}$ is chordless; however, this property need not hold when the cycle is viewed in the precedence digraph $D_F$ associated with an FPS $F$ containing $\tilde F$.
This is illustrated in Figure \ref{fig.chord}.
Let $F = \{(a,b), (c,d), (e,f), (a,f)\}$.
In Figure \ref{fig.chord.left}, the subset $\tilde F = \{(a,b), (c,d), (e,f)\}$ of $F$ is a minimal FPS whose precedence digraph $D_{\tilde F}$ has a unique and chordless cycle.
In contrast, this cycle has a chord in $D_F$, as shown in Figure \ref{fig.chord.right}.

\begin{figure}

\begin{subfigure}{0.49\textwidth}
\resizebox{\linewidth}{!}{
\begin{tikzpicture}[
every node/.style={circle, draw=black, fill=black!5, very thick, minimum size=2mm, inner sep=0},
every path/.style={line width=1pt},
scale=0.8
]
\foreach \x/\s in {0/rectangle,1/circle,2/rectangle,3/circle,4/rectangle,5/circle}{
    \node[\s] (\x) at (60-60*\x:1.5) {};
}
\foreach \x/\v in {0/a,1/b,2/c,3/d,4/e,5/f}{
    \node[draw=none, fill=none] (\v) at (60-60*\x:1.9) {$\v$};
}
\foreach \x/\v in {0,1,2,3,4,5}{
    \pgfmathtruncatemacro{\y}{mod(\x+1,6)}
    \draw (\x) -- (\y);
}
\draw[->, Cyan] (0) to [bend left] (1);
\draw[->, Cyan] (2) to [bend left] (3);
\draw[->, Cyan] (4) to [bend left] (5);
\end{tikzpicture}
\begin{tikzpicture}[
every node/.style={circle, draw=black, fill=black!5, very thick, minimum size=2mm, inner sep=0},
every path/.style={line width=1pt},
scale=0.8
]
\foreach \x in {0,1,2,3,4,5}{
    \node (\x) at (60-60*\x:1.5) {};
}
\foreach \x/\v in {0/a,1/b,2/c,3/d,4/e,5/f}{
    \node[draw=none, fill=none] (\v) at (60-60*\x:1.9) {$\v$};
}
\draw[->, bend left, ] (0) to (1);
\draw[->, bend left=15, Magenta] (5) to (1);
\draw[->, bend left] (2) to (3);
\draw[->, bend left=15, Magenta] (1) to (3);
\draw[->, bend left] (4) to (5);
\draw[->, bend left=15, Magenta] (3) to (5);
\end{tikzpicture}
}
\caption{}
\label{fig.chord.left}
\end{subfigure}
\hfill
\begin{subfigure}{0.49\textwidth}
\resizebox{\linewidth}{!}{
\begin{tikzpicture}[
every node/.style={circle, draw=black, fill=black!5, very thick, minimum size=2mm, inner sep=0},
every path/.style={line width=1pt},
scale=0.8
]
\foreach \x/\s in {0/rectangle,1/circle,2/rectangle,3/circle,4/rectangle,5/circle}{
    \node[\s] (\x) at (60-60*\x:1.5) {};
}
\foreach \x/\v in {0/a,1/b,2/c,3/d,4/e,5/f}{
    \node[draw=none, fill=none] (\v) at (60-60*\x:1.9) {$\v$};
}
\foreach \x/\v in {0,1,2,3,4,5}{
    \pgfmathtruncatemacro{\y}{mod(\x+1,6)}
    \draw (\x) -- (\y);
}
\draw[->, Cyan] (0) to [bend left] (1);
\draw[->, Cyan] (2) to [bend left] (3);
\draw[->, Cyan] (4) to [bend left] (5);
\draw[->, Cyan, dashed] (0) to [bend right] (5);
\end{tikzpicture}
\begin{tikzpicture}[
every node/.style={circle, draw=black, fill=black!5, very thick, minimum size=2mm, inner sep=0},
every path/.style={line width=1pt},
scale=0.8
]
\foreach \x in {0,1,2,3,4,5}{
    \node (\x) at (60-60*\x:1.5) {};
}
\foreach \x/\v in {0/a,1/b,2/c,3/d,4/e,5/f}{
    \node[draw=none, fill=none] (\v) at (60-60*\x:1.9) {$\v$};
}
\draw[->, bend left, ] (0) to (1);
\draw[->, bend left=15, Magenta] (5) to (1);
\draw[->, bend left] (2) to (3);
\draw[->, bend left=15, Magenta] (1) to (3);
\draw[->, bend left] (4) to (5);
\draw[->, bend left=15, Magenta] (3) to (5);
\draw[->, bend right, dashed] (0) to (5);
\draw[->, bend left=15, dashed, Magenta] (1) to (5);
\end{tikzpicture}
}
\caption{}
\label{fig.chord.right}
\end{subfigure}

\caption{(a) The cyan FPS imposes the magenta  chordless cycle in the precedence digraph; (b) the dashed cyan redundant propagation adds the dashed magenta chord.}
\label{fig.chord}
\end{figure}

In the absence of redundant incoming propagations, minimal FPSs contained in $F$ define chordless cycles in $D_F$.

\begin{lemma}\label{lemma.minimal.implies.chordless}
    Let $F \subseteq A_P$ be an FPS without redundant incoming propagations. 
    For every minimal FPS $\tilde F \subseteq F$, the unique cycle $C_{\tilde F}$ of $D_{\tilde F}$ is chordless in $D_F$.
\end{lemma}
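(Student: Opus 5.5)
The plan is to argue by contradiction, reducing a chord in $D_F$ to a chord already present in $D_{\tilde F}$. Let $\tilde F \subseteq F$ be a minimal FPS. By Lemma \ref{lemma.minimal.implies.exactly.one.cycle}, $D_{\tilde F}$ has a unique cycle $C_{\tilde F} = (e_1,\dots,e_r)$. By Proposition \ref{prop.minimally.impose} and Corollary \ref{corollary.minimal.implies.no.redundancy}, $\tilde F = \{p_1,\dots,p_r\}$ with $p_i = \varphi_{\tilde F}(e_i)$, and the target of $p_i$ is the target $v_{i+1}$ of $e_i$. Thus the propagations of $\tilde F$ are in bijection with the vertices of $C_{\tilde F}$, each $p_i$ being the unique propagation of $\tilde F$ entering $v_{i+1}$.

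Now suppose $D_F$ contains a chord $e=(w,v)$ of $C_{\tilde F}$. Here $w$ and $v$ are vertices of $C_{\tilde F}$ and $e \notin C_{\tilde F}$. By Remark \ref{remark.preliminary.results}, $e$ is imposed by some $q \in F$, and $q$ has target $v$. Since $v$ lies on $C_{\tilde F}$, we have $v = v_{i+1}$ for some $i$, and $p_i \in \tilde F \subseteq F$ also has target $v$. Because $F$ has no redundant incoming propagations, $q = p_i$, so $q \in \tilde F$. This is the key step, and it is where the hypothesis on $F$ is used; Figure \ref{fig.chord} shows exactly what breaks without it.

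It follows that $e \in \psi(p_i) \subseteq \psi(\tilde F)$, so $e$ is an arc of $D_{\tilde F}$ and is a chord of $C_{\tilde F}$ inside $D_{\tilde F}$. As in the proof of Lemma \ref{lemma.minimal.implies.exactly.one.cycle}, $e$ together with the path of $C_{\tilde F}$ from $v$ to $w$ forms a cycle of $D_{\tilde F}$. This cycle is distinct from $C_{\tilde F}$ because it contains $e \notin C_{\tilde F}$. That contradicts $|\CC(D_{\tilde F})| = 1$, so $C_{\tilde F}$ is chordless in $D_F$.

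The only delicate point I expect is the identification $q = p_i$. It needs the target-preservation property of $\varphi$ and the fact that every vertex of $C_{\tilde F}$ has an incoming cycle arc, hence an incoming propagation of $\tilde F$. Both are already available from Remark \ref{remark.preliminary.results} and Proposition \ref{prop.minimally.impose}.
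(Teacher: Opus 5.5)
Your proof is correct and turns on the same key step as the paper's: because $F$ has no redundant incoming propagations, the propagation imposing the chord $(w,v)$ must be the unique propagation of $\tilde F$ entering $v$. The only difference is the finish. The paper builds the shorter cycle $C'$ in $D_F$ and the proper sub-FPS $F'=\{\varphi_F(e): e\in C'\}$ of $\tilde F$ to contradict minimality directly, whereas you observe that the chord already lies in $D_{\tilde F}$ and contradict Lemma~\ref{lemma.minimal.implies.exactly.one.cycle}, which is slightly more economical.
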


\begin{proof}
Let $\tilde F \subseteq F$ be a minimal FPS.
By Lemma \ref{lemma.minimal.implies.exactly.one.cycle}, $D_{\tilde F}$ contains a unique cycle $C_{\tilde F}$.
Since $\tilde F \subseteq F$, $D_{\tilde F}$ is a subgraph of $D_F$, and thus $C_{\tilde F}$ is also a cycle of $D_F$.

We first characterize the structure of $\tilde F$.
Since $F$ has no redundant incoming propagations, Remark \ref{remark.preliminary.results} ensures that $|\varphi(e) \cap F| = 1$ for every arc $e \in D_F$, and this unique propagation is denoted by $\varphi_F(e)$.
Define $\tilde F' = \{\varphi_F(e) : e \in C_{\tilde F}\} \subseteq F$ and, by Proposition \ref{prop.minimally.impose}, $\tilde F'$ is an FPS that minimally imposes $C_{\tilde F}$. 
We claim that $\tilde F' = \tilde F$.
Let $e \in C_{\tilde F}$. 
Since $C_{\tilde F}$ is a cycle of $D_{\tilde F}$, by Remark \ref{remark.preliminary.results} we have $\varphi(e) \cap \tilde F \neq \emptyset$.
As $\tilde F \subseteq F$, it follows that $\varphi(e) \cap \tilde F \subseteq \varphi(e) \cap F = \{\varphi_F(e)\}$, and hence $\varphi_F(e) \in \tilde F$. Thus, $\tilde F' \subseteq \tilde F$ and $\tilde F'$ is an FPS.
Finally, since $\tilde F$ is a minimal FPS, we conclude that $\tilde F = \tilde F' = \{\varphi_F(e) : e \in C_{\tilde F}\}$.

Suppose, for contradiction, that $C_{\tilde F}$ has a chord $(c_1,c_2)$ in $D_F$.
Then, $D_{F}$ contains a cycle $C'$ shorter than $C_{\tilde F}$, consisting of $(c_1,c_2)$ together with the path in $C_{\tilde F}$ from $c_2$ to $c_1$.
Now, define $F' = \{\varphi_{F}(e): e \in C'\} \subseteq F$ and, by Proposition \ref{prop.minimally.impose}, $F'$ is an FPS that minimally imposes $C'$.
We prove that $F' \subseteq \tilde F$.
For every arc $e \in C' \setminus \{(c_1,c_2)\}$, it holds that $\varphi_F(e) \in \tilde F$, since $e$ is also an arc of $C_{\tilde F}$.
For the remaining arc $(c_1,c_2)$, note that $c_2$ is vertex of $C_{\tilde F}$, so there exists an arc $(u,c_2)$ in $C_{\tilde F}$.
Since $F$ has no redundant incoming propagations and $(c_1,c_2)$ and $(u,c_2)$ have the same target, Remark \ref{remark.preliminary.results} ensures $ \varphi_F(c_1,c_2) = \varphi_F(u,c_2) \in \tilde F$.

Observe that $|F'| = |C'| < |C_{\tilde F}| = |\tilde F|$. Therefore, $F'$ is an FPS and proper subset of $\tilde F$, contradicting the minimality of $\tilde F$.
\end{proof}

Lemmas \ref{lemma.chordless.implies.minimal} and \ref{lemma.minimal.implies.chordless} imply the following complete characterization.

\begin{proposition}\label{prop.characterization.minimal.FPS}
    Let $F \subseteq A_P$ be an FPS. 
    \begin{enumerate}
    \item For every  chordless cycle $C \in \CC(D_F)$, any FPS $\tilde F \subseteq F$ that minimally imposes $C$ is a minimal FPS.
    \label{prop.characterization.minimal.FPS.part.1}
    \item If $F$ has no redundant incoming propagations, for every minimal FPS $\tilde F \subseteq F$, the unique cycle $C_{\tilde F}$ of $D_{\tilde F}$ is chordless in $D_F$.
    \label{prop.characterization.minimal.FPS.part.2} 
    \end{enumerate}
\end{proposition}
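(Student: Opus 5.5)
This proposition is a repackaging of two lemmas already proved, so the plan is to invoke them directly and check that their hypotheses match the two items exactly.

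For item~\ref{prop.characterization.minimal.FPS.part.1}, let $C \in \CC(D_F)$ be chordless and let $\tilde F \subseteq F$ be an FPS that minimally imposes $C$. This is precisely the setting of Lemma~\ref{lemma.chordless.implies.minimal}, which gives that $\tilde F$ is a minimal FPS. For the reader, I would recall the mechanism of that lemma. By Proposition~\ref{prop.minimally.impose}, $\tilde F$ consists of one propagation per arc of $C$. Any minimal FPS $\tilde F' \subsetneq \tilde F$ would impose a cycle $\tilde C$ with $|\tilde C| = |\tilde F'| < |C|$. Because $\varphi$ preserves targets, every vertex of $\tilde C$ lies on $C$, so $\tilde C$ must use an arc outside $C$, and that arc is a chord of $C$, a contradiction.

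For item~\ref{prop.characterization.minimal.FPS.part.2}, assume $F$ has no redundant incoming propagations and let $\tilde F \subseteq F$ be a minimal FPS.
\begin{enumerate}
\item Lemma~\ref{lemma.minimal.implies.exactly.one.cycle} guarantees that $D_{\tilde F}$ has a unique cycle $C_{\tilde F}$, so the statement is well posed.
\item Lemma~\ref{lemma.minimal.implies.chordless} then gives that $C_{\tilde F}$ is chordless in $D_F$.
\end{enumerate}
The main point to check is that the hypothesis ``no redundant incoming propagations'' in the proposition is the one used in the lemma, and it is. That hypothesis is essential: it forces $\varphi_F(c_1,c_2) = \varphi_F(u,c_2) \in \tilde F$ for a chord $(c_1,c_2)$, which is what yields a strictly smaller FPS inside $\tilde F$. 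Figure~\ref{fig.chord} shows that without it the conclusion fails.

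Since both items reduce verbatim to earlier lemmas, I expect no real obstacle. The only care needed is the phrase ``the unique cycle'' in item~\ref{prop.characterization.minimal.FPS.part.2}, which must be justified by Lemma~\ref{lemma.minimal.implies.exactly.one.cycle} before citing Lemma~\ref{lemma.minimal.implies.chordless}.
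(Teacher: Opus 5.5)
Your proposal is correct and follows the paper's approach. The paper also obtains the proposition directly from Lemma~\ref{lemma.chordless.implies.minimal} for item~1 and Lemma~\ref{lemma.minimal.implies.chordless} for item~2, with uniqueness of $C_{\tilde F}$ coming from Lemma~\ref{lemma.minimal.implies.exactly.one.cycle}.
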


\subsection{Forbidden propagation sets of minimum cardinality}

Given a graph $G=(V,E)$ and $V_P \subseteq V$, we now investigate the structure of the family $\mathcal{F}_2(G,V_P)$ of minimal FPSs $F$ such that $D_F$ contains a 2-cycle.
This family corresponds to the FPSs of minimum cardinality.

Figure \ref{fig.fps.length.2} illustrates FPSs whose membership in $\mathcal{F}_2(G, V_P)$ is straightforward.
In this figure, rectangular vertices are required to belong to $V_P$, whereas circular vertices may or may not belong to $V_P$, and dashed edges may or may not belong to $G$. 
In addition, $\mathcal{F}_2(G,V_P)$ is completely characterized by these four cases.
Before presenting the proof, we make the following observation.
Let $e = (w,v)$ be an arc of $D$ and let $p \in \varphi(e)$. 
We say that $e$ is a \emph{direct} precedence for $p$ if $p = e$, in which case $w \in V_P$ and the edge $wv$ belongs to $G$. 
Otherwise, $e$ is an \emph{indirect} precedence for $p$, meaning that $p = (u,v)$ for some $u \in V_P$ such that $u$ is a common neighbor of $w$ and $v$, hence both edges $uv$ and $uw$ belong to $G$.

\begin{figure}

\captionsetup[subfigure]{labelformat=empty}

\begin{subfigure}[t]{0.24\textwidth}
\centering
\begin{tikzpicture}[
every node/.style={circle, draw=black, fill=black!5, very thick, minimum size=2mm, inner sep=0},
every path/.style={line width=1pt}
]
\node [label=above:$v$, rectangle] (v) {};
\node [label=below:$u$, below=of v, rectangle] (u) {};
\draw (u) -- (v);
\draw[->,cyan] (u) to [bend left] (v);
\draw[->,cyan] (v) to [bend left] (u);
\end{tikzpicture}
\caption{(i)}
\end{subfigure}
\begin{subfigure}[t]{0.25\textwidth}
\centering
\begin{tikzpicture}[
every node/.style={circle, draw=black, fill=black!5, very thick, minimum size=2mm, inner sep=0},
every path/.style={line width=1pt}
]
\node [label=below:$x$, rectangle] (x) {};
\node [label=above:$v$, above right=1cm and 0.5cm of x] (v) {};
\node [label=below:$u$, below right=1cm and 0.5cm of v, rectangle] (u) {};
\draw (x) -- (u) -- (v) -- (x);
\draw[->,cyan] (x) to [bend right] (u);
\draw[->,cyan] (u) to [bend right] (v);
\end{tikzpicture}
\caption{(ii)}
\end{subfigure}
\begin{subfigure}[t]{0.24\textwidth}
\centering
\begin{tikzpicture}[
every node/.style={circle, draw=black, fill=black!5, very thick, minimum size=2mm, inner sep=0},
every path/.style={line width=1pt}
]
\node [label=below:$x$, rectangle] (x) {};
\node [label=above:$v$, above right=1cm and 0.5cm of x] (v) {};
\node [label=below:$u$, below right=1cm and 0.5cm of v] (u) {};
\draw (x) -- (u);
\draw (x) -- (v);
\draw[dashed] (u) -- (v);
\draw[->,cyan] (x) to [bend right] (u);
\draw[->,cyan] (x) to [bend left] (v);
\end{tikzpicture}
\caption{(iii)}
\end{subfigure}
\begin{subfigure}[t]{0.25\textwidth}
\centering
\begin{tikzpicture}[
every node/.style={circle, draw=black, fill=black!5, very thick, minimum size=2mm, inner sep=0},
every path/.style={line width=1pt}
]
\node [label=left:$x$, rectangle] (x) {};
\node [label=right:$u$, below right=0.75cm and 0.75cm of x] (u) {};
\node [label=left:$v$, above right=0.75cm and 0.75cm of x] (v) {};
\node [label=right:$w$, below right=0.75cm and 0.75cm of v, rectangle] (w) {};
\draw (u) -- (x) -- (v) -- (w) -- (u);
\draw[dashed] (u) -- (v);
\draw[dashed] (w) -- (x);
\draw[->,cyan] (x) to [bend right] (u);
\draw[->,cyan] (w) to [bend right] (v);
\end{tikzpicture}
\caption{(iv)}
\end{subfigure}

\caption{
The cyan propagations are FPSs that minimally impose 2-cycles.}
\label{fig.fps.length.2}
\end{figure}

\begin{proposition}\label{prop.fps.2-cycles}
Let $G=(V,E)$ be a graph and $V_P \subseteq V$.
Then,
\begin{enumerate}
    \item For $u,v \in V_P$ such that $uv \in E$, $F = \{(u,v),(v,u)\} \in \FF_2(G,V_P)$.
    
    \item For $x,u \in V_P$ such that $xu \in E$ and $v \in N(x) \cap N(u)$, $F = \{(x,u), (u,v)\} \in \FF_2(G,V_P)$.
    
    \item For $x \in V_P$ and $u,v \in N(x)$ with $u \neq v$, $F = \{(x,u), (x,v)\} \in \FF_2(G,V_P)$.
    
    \item For $x,w \in V_P$ and $u,v \in N(u) \cap N(v)$ with $x \neq w$ and $u \neq v$, $F = \{(x,u),(w,v)\}  \in \FF_2(G,V_P)$.
\end{enumerate}
Moreover, every FPS in $\FF_2(G,V_P)$ falls into one of the above cases.
\end{proposition}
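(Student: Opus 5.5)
Observe first that no FPS has a single propagation. For $p=(u,v)$, every arc of $D_{\{p\}}$ has target $v$, and $D_{\{p\}}$ has no loops because $v \notin N[u]\setminus\{v\}$. So $D_{\{p\}}$ is acyclic, and any set $F$ whose precedence digraph contains a 2-cycle and which has exactly two propagations is automatically a minimal FPS.

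By Proposition \ref{prop.minimally.impose}, an FPS that minimally imposes a 2-cycle $C=((u,v),(v,u))$ is exactly a set $\{p_1,p_2\}$ with $p_1\in\varphi(u,v)$ and $p_2\in\varphi(v,u)$. Conversely, if $F\in\FF_2(G,V_P)$, then $F$ minimally imposes its 2-cycle (Definition \ref{def.minimal.FPS}), so $|F|=2$. Hence $\FF_2(G,V_P)$ is precisely the family of sets $\{p_1,p_2\}$ with $p_1\in\varphi(u,v)$, $p_2\in\varphi(v,u)$ for some distinct $u,v$. Such a pair never collapses to one element, since $p_1$ has target $v$ and $p_2$ has target $u$.

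For the \textbf{sufficiency} direction, in each of the four cases I exhibit the 2-cycle explicitly.
\begin{enumerate}[label=(\roman*)]
\item With $p_1=(u,v)$ and $p_2=(v,u)$, both are direct precedences: $(u,v)\in\psi(u,v)$ and $(v,u)\in\psi(v,u)$.
\item $(x,u)$ imposes $(v,u)$, since $v\in N(x)\setminus\{u\}$. Also $(u,v)$ imposes $(u,v)$ directly.
\item $(x,u)$ imposes $(v,u)$ and $(x,v)$ imposes $(u,v)$, because $u,v\in N(x)$ and $u\neq v$.
\item Reading the hypothesis as $u,v\in N(x)\cap N(w)$ (the statement's ``$N(u)\cap N(v)$'' appears to be a typo), $(x,u)$ imposes $(v,u)$ since $v\in N(x)$, and $(w,v)$ imposes $(u,v)$ since $u\in N(w)$.
\end{enumerate}
Combined with the observation in the first paragraph, each such $F$ lies in $\FF_2(G,V_P)$.

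For the \textbf{completeness} direction, take $F=\{p_1,p_2\}$ with $p_1\in\varphi(u,v)$ (target $v$) and $p_2\in\varphi(v,u)$ (target $u$). Split on whether each precedence is direct or indirect for its propagation, using the direct/indirect observation stated before the proposition.
\begin{itemize}
\item \textbf{Direct/direct.} Then $p_1=(u,v)$ and $p_2=(v,u)$ with $u,v\in V_P$ and $uv\in E$. This is case (i).
\item \textbf{One direct, one indirect.} Say $p_2=(v,u)$ is direct and $p_1=(x,v)$ is indirect, with $x\in V_P$ a common neighbour of $u$ and $v$. Then $v\in V_P$, $vu\in E$ and $x\in N(v)\cap N(u)$. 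After relabeling, this is case (ii); the symmetric situation is identical.
\item \textbf{Indirect/indirect.} Then $p_1=(x,v)$ with $x\in V_P$ and $x\in N(u)\cap N(v)$, and $p_2=(w,u)$ with $w\in V_P$ and $w\in N(u)\cap N(v)$. If $x=w$ we are in case (iii). If $x\neq w$ we are in case (iv). Note that $u\neq v$ in both, and the edges $uv$ and $xw$ are unconstrained, matching the dashed edges of Figure \ref{fig.fps.length.2}.
\end{itemize}

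The main obstacle is mostly bookkeeping. The case analysis must cover all direct/indirect combinations. The labels must be matched to the statement's variables, since the roles of $u$ and $v$ swap between cases. It must also be checked that the two propagations are distinct, which holds because their targets differ, and that no degenerate coincidence such as $x\in\{u,v\}$ arises; this is excluded because $x$ is adjacent to both $u$ and $v$ and the graph has no loops.
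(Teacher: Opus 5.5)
Your proof is correct and follows essentially the same route as the paper: use Proposition~\ref{prop.minimally.impose} to reduce a member of $\FF_2(G,V_P)$ to one propagation from each of $\varphi(u,v)$ and $\varphi(v,u)$. You then split on direct/indirect precedences and on whether the two indirect sources coincide, exactly as the paper does. Your explicit sufficiency argument fills in what the paper dismisses as ``follows directly'': singletons are never FPSs, so any two-element set imposing a 2-cycle is a minimal FPS. Your reading of case (iv) as $u,v \in N(x)\cap N(w)$ also correctly fixes an evident typo in the statement.
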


\begin{proof}


The first part follows directly, so we prove the converse. 
Let $F$ be a minimal FPS such that $D_F$ contains a 2-cycle, say $C = ((u,v),(v,u)) \in \CC(D_F)$. We show that $F$ belongs to one of the cases described above.

Since $F$ is a minimal FPS, $F$ minimally imposes $C$.
Then, by Proposition \ref{prop.minimally.impose}, $F$ consists of exactly two propagations, one in $\varphi(u,v)$, say $p_1=(w,v)$, and one in $\varphi(v,u)$, say $p_2=(x,u)$.

If both precedences of $C$ are direct for $p_1$ and $p_2$, respectively, then $w = u$, $x = v$, and $u$ and $v$ are adjacent, which yields the first case.

Otherwise, suppose that exactly one precedence of $C$ is indirect. Without loss of generality, suppose that $(v,u)$ is an indirect precedence for $p_2$, while $(u,v)$ is a direct precedence for $p_1$.
Then $w = u$, $u$ and $v$ are adjacent, and $x$ is a common neighbor of $u$ and $v$, which matches the second case.

Finally, suppose that both precedences of $C$ are indirect.
Then $w$ and $x$ are common neighbors of $u$ and $v$.
If $w = x$, we obtain the third case.
Otherwise, when $w \neq x$, we obtain the fourth case.

This exhaustively covers all possibilities, completing the description of $\FF_2(G,V_P)$.
\end{proof}

\subsection{Extended forbidden propagation sets}\label{subsec.efps}

Rather than focusing on minimal FPSs, we now adopt the opposite perspective and consider unions of FPSs sharing specific structural properties

\begin{definition}\label{def.efps}
    Given $C \in \CC(D)$, the \emph{extended forbidden propagation set} (EFPS) defined by $C$ is the union of all FPSs in $A_P$ that minimally impose $C$.
\end{definition}

\begin{remark}
    By Proposition \ref{prop.minimally.impose}, the EFPS defined by $C$ is $\bigcup_{e \in C} \varphi(e)$, which we denote by $\varphi(C)$.
    Additionally, $|\varphi(C)| = \sum_{e \in C} |\varphi(e)| \geq |C|$.
\end{remark}

For example, in Figure \ref{fig.efps}, consider the  cycle $C = ((b,d),(d,b))$, then $\varphi(C) = \varphi(b,d) \cup \varphi(d,b) = \{(a,d),(c,d)\} \cup \{(a,b),(c,b)\} = \{(a,d),(c,d),(a,b),(c,b)\}$.

Since every EFPS $\varphi(C)$ is also an FPS, Remark \ref{remark.fps} implies that at most $|\varphi(C)|-1$ propagations in $\varphi(C)$ can be applied in any proper calculation of $M(\rho)$. 
In fact, a stronger result holds.

\begin{proposition}\label{prop.efps}
Let $\varphi(C)$ be an EFPS. At most $|C| - 1$ propagations of $\varphi(C)$ can be applied in any proper calculation of $M(\rho)$.
\end{proposition}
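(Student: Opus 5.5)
Let $R$ be the set of propagations applied in a proper calculation of $M(\rho)$, and set $R_C = R \cap \varphi(C)$. The plan is to show $|R_C| \le |C|-1$ by combining a counting argument over the arcs of $C$ with Lemma \ref{lemma.DAG}.

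First I would use properness. A proper calculation has no redundant incoming propagations, because each vertex is monitored exactly once; so at most one propagation of $R$ targets any given vertex. By Remark \ref{remark.preliminary.results}, for distinct arcs $e_i, e_j \in C$ the sets $\varphi(e_i)$ and $\varphi(e_j)$ are disjoint, since they target the distinct vertices that are the heads of $e_i$ and $e_j$. Hence $\varphi(C)$ is the disjoint union of the sets $\varphi(e)$ over $e \in C$. Moreover, every propagation in $\varphi(e)$ targets the head of $e$, so $|R \cap \varphi(e)| \le 1$ for each $e \in C$. Summing over the arcs gives $|R_C| \le |C|$.

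Next I would rule out equality. Suppose $|R_C| = |C|$. Then for every $e_i \in C$ there is exactly one propagation $p_i \in R \cap \varphi(e_i)$. The set $\{p_1,\ldots,p_r\}$ has the form in Proposition \ref{prop.minimally.impose}, so it is an FPS that minimally imposes $C$. In particular $C \in \CC(D_{\{p_1,\dots,p_r\}})$; more directly, each $e_i \in \psi(p_i) \subseteq \psi(R)$. Therefore $C$ is a cycle of $D_R$, contradicting Lemma \ref{lemma.DAG}. Consequently $|R_C| \le |C|-1$.

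The only delicate point is the first step: justifying that a proper calculation never applies two propagations with the same target. This holds because every applied rule must enlarge the monitored set by exactly one vertex, so a second propagation into an already monitored vertex would be redundant. This fact is already used in the proof of Lemma \ref{lemma.DAG}, so I expect no real obstacle beyond stating it explicitly.
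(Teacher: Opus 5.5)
Your proof is correct and follows essentially the same argument as the paper. Both use properness (at most one applied propagation per target, hence per arc of $C$) and Lemma~\ref{lemma.DAG} to exclude the case in which every arc of $C$ is imposed by an applied propagation. The paper argues by contradiction with a case split (some arc hit twice versus every arc hit), while you first bound $|R \cap \varphi(C)| \le |C|$ and then rule out equality, which is a reordering of the same reasoning.
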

\begin{proof}
Let $R \subseteq A_P$ be the set of propagations applied in a proper calculation of $M(\rho)$.
Suppose, for a contradiction, that $|R \cap \varphi(C)| \geq |C|$.
If $|R \cap \varphi(e)| \geq 1$ for every $e \in C$, then $R$ contains an FPS that minimally imposes $C$, which contradicts Lemma \ref{lemma.DAG}.
Otherwise, there exists an arc $e = (w,v) \in C$ such that $|R \cap \varphi(e)| \geq 2$.
Since all propagations in $\varphi(e)$ have the target $v$, this would imply that $v$ receives more than one incoming propagation from $R$, contradicting the assumption that the calculation of $M(\rho)$ is proper.
\end{proof}

The next remark highlights that minimal FPSs are always contained in EFPSs, a result that will be crucial for extending our models to this broader class of structures.

\begin{remark}\label{remark.fps.replaced.by.efps}
Let $F \subseteq A_P$ be a minimal FPS.
By Lemma \ref{lemma.minimal.implies.exactly.one.cycle}, $D_F$ contains a unique cycle denoted by $C_{F}$.
Then, $F$ minimally imposes $C_{F}$ and, by Remark \ref{remark.FPS.minimally.impose.cycle}, it follows that $F \subseteq \varphi(C_F)$.
\end{remark}

\section{Integer linear programming formulations}\label{sec.fpsip}

This section aims to develop ILP formulations for the CPDS based on FPSs.
We consider decision variables to describe a $k$-capacitated power dominating function $\rho$.
For each $v \in V$, the variable $s_v = 1$ if and only if $v \in S_{\rho}$, for each $u \in V$ and $v \in N(u)$, the variable $w_{uv} = 1$ if $v \in \rho(u)$, and for each $(u,v) \in A_P$, the variable $y_{uv} = 1$ if rule (PR) is applied at $u$ to monitor $v$ in the calculation of $M(\rho)$.
Observe that $w_{uv}$ is irrelevant when $\delta(u) \leq k$, since in that case $\rho(u)$ may contain all neighbors of $u$. 
Consequently, we introduce a variable $w_{uv}$ only for each $(u,v)$ in $A_D = \{(u,v): u \in V,\ \delta(u) > k,\ v \in N(u) \}$.
Let $\FF(G,V_P)$ be the set of minimal FPSs of $(G,V_P)$.
\begin{align}
\text{(\FPSIP)}\ \min\  & \sum_{v \in V} s_v \label{FPSIP.ObjFun}\\
s.t.\  & s_v + \sum_{\mathclap{\substack{u \in N(v):\\\delta(u) \leq k}}} s_u + \sum_{\mathclap{\substack{u \in N(v):\\\delta(u) > k}}} w_{uv} + \sum_{\mathclap{u \in N(v) \cap V_P}} y_{uv} \geq 1 & v \in V \label{FPSIP.R1}\\
& \sum_{u \in N(v)} w_{vu} \leq ks_v & v \in V: \delta(v) > k \label{FPSIP.R2}\\
& \sum_{(u,v) \in F} y_{uv} \leq |F|-1 & F \in \FF(G,V_P) \label{FPSIP.R3}\\
& s \in \{0,1\}^V,\ w \in \{0,1\}^{A_D},\ y \in \{0,1\}^{A_P}. \notag
\end{align}

Objective function \eqref{FPSIP.ObjFun} minimizes $|S_{\rho}|$. 
Constraints \eqref{FPSIP.R1} enforce $M(\rho) = V$, i.e., each vertex $v$ is monitored either because $v \in S_{\rho}$ or some neighbor $u$ monitors $v$ through rule (DR) or (PR).
In the particular case where rule (DR) is applied, if $\delta(u) \leq k$, then $u$ monitors $v$ whenever $u \in S_{\rho}$ ($s_u = 1$); but if $\delta(u) > k$, the monitoring additionally requires that $v \in \rho(u)$ ($w_{uv} = 1$).
Constraints \eqref{FPSIP.R2} ensure that $\rho$ is a $k$-capacitated function, i.e, that each vertex $v$ monitors at most $k$ neighbors through rule (DR) when $v \in S_{\rho}$ ($s_v = 1$), and none when $v \notin S_{\rho}$ ($s_v = 0$).
Constraints \eqref{FPSIP.R3}, referred to as \emph{FPS constraints}, ensure that not every propagation in an FPS is applied in the solution (see Remark \ref{remark.fps}).
The number of FPS constraints may be exponential, since FPSs are subsets of propagations.

Given $\tilde \FF \subseteq \FF(G,V_P)$, we denote by \FPSIPF{$\tilde \FF$} the formulation obtained from \FPSIP by omitting all constraints associated with minimal FPSs that do not belong to $\tilde \FF$. 
Before proving the correctness of formulation \FPSIP, we first prove that any solution of \FPSIPF{$\tilde \FF$} that does not correspond to a valid $k$-capacitated power dominating function necessarily activates a set of propagations whose precedence digraph contains a  cycle.
This result, together with its proof, will also be the basis for the development of a solution approach that dynamically generates FPS constraints in Section \ref{sec.constraint.generation}.

\begin{lemma}\label{lemma.fps.constraint.generation}
Let $\tilde \FF \subseteq \FF(G,V_P)$ and $(\tilde s, \tilde w, \tilde y)$ be a feasible solution of \FPSIPF{$\tilde \FF$}.
Define $\rho: S_{\rho} \to \PP(V)$ as $S_{\rho} = \{v \in V: \tilde s_v = 1\}$ and, for each $u \in S_{\rho}$, $\rho(u) = N(u)$ if $\delta(u) \leq k$ and $\rho(u) = \{v \in N(u): \tilde w_{uv} = 1\}$ if $\delta(u) > k$.

If $\rho$ is not a $k$-capacitated power dominating function, then the set $F = \{(u,v) \in A_P: \tilde y_{uv} = 1\}$ is an FPS.
Moreover, the precedence digraph $D_F$ contains a  cycle whose vertices all belong to $V \setminus M(\rho)$. 
\end{lemma}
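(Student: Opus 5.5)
Let $U = V \setminus M(\rho)$. Since $\rho$ is $k$-capacitated by constraints \eqref{FPSIP.R2} but not power dominating, $U \neq \emptyset$. The plan is to show that every vertex of $U$ has an incoming arc in $D_F$ whose tail also lies in $U$. Then the subdigraph of $D_F$ induced by $U$ is finite and has minimum in-degree at least one. Walking backwards along incoming arcs inside $U$ must eventually repeat a vertex, which yields a cycle of $D_F$ with all vertices in $U$. This gives both claims at once: $\CC(D_F)\neq\emptyset$, so $F$ is an FPS, and the cycle lies in $V\setminus M(\rho)$.

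\textbf{Step 1.} Fix $v \in U$ and apply constraint \eqref{FPSIP.R1} at $v$. Since $v \notin M(\rho)$, rule (DR) does not monitor $v$, so we check each term of the left-hand side.
\begin{itemize}
\item $\tilde s_v = 0$, since otherwise $v \in S_\rho$ and $v$ would be monitored.
\item For $u \in N(v)$ with $\delta(u)\le k$, we have $\tilde s_u=0$, since otherwise $v\in\rho(u)=N(u)$.
\item For $u \in N(v)$ with $\delta(u)>k$, we have $\tilde w_{uv}=0$. Otherwise constraint \eqref{FPSIP.R2} forces $\tilde s_u=1$ (as $\tilde w_{uv}\le k\tilde s_u$), so $u\in S_\rho$ and $v\in\rho(u)$, and $v$ would be monitored.
\end{itemize}
Hence some $y$-term equals one: there is $u \in N(v)\cap V_P$ with $\tilde y_{uv}=1$, that is, $(u,v)\in F$.

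\textbf{Step 2.} Show that some $w \in N[u]\setminus\{v\}$ lies in $U$. Suppose instead that all of $N[u]\setminus\{v\}$ is contained in $M(\rho)$. Then $u \in V_P\cap M(\rho)$ and $N(u)\setminus\{v\}\subseteq M(\rho)$. Because $M(\rho)$ is closed under rule (PR), this gives $v\in M(\rho)$, a contradiction. So such a $w\in U$ exists. It satisfies $w\neq v$, and $(w,v)\in\psi(u,v)\subseteq\psi(F)$ is an arc of $D_F$.

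\textbf{Step 3.} Extract the cycle. Starting from any $v_0\in U$, use Steps 1 and 2 to pick $v_1\in U$ with $(v_1,v_0)$ an arc of $D_F$, then $v_2\in U$ with $(v_2,v_1)$ an arc, and so on. Since $V$ is finite, some vertex repeats. The segment between the first repetition, read in the forward direction, is a simple cycle of $D_F$ with all vertices in $U$.

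The main subtlety is in Steps 1 and 2, not Step 3. Step 1 needs constraint \eqref{FPSIP.R2} to rule out $w$-variables set to one at vertices with $\tilde s_u=0$. Step 2 must correctly use the minimality (closure) definition of $M(\rho)$, rather than an argument about the order of a calculation. Note that the FPS constraints themselves are never used, which is why the statement holds for an arbitrary subfamily $\tilde\FF$.
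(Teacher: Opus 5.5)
Your proof is correct and follows essentially the same route as the paper: constraint \eqref{FPSIP.R1} together with the closure of $M(\rho)$ under rule (PR) shows that every unmonitored vertex has an incoming arc of $D_F$ from another unmonitored vertex, and walking backwards until a vertex repeats yields the cycle. Your explicit use of \eqref{FPSIP.R2} to rule out $\tilde w_{uv}=1$ with $\tilde s_u=0$ fills in a detail the paper leaves implicit.
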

\begin{proof}
First, note that $|\rho(u)| \leq k$ for all $u \in S_{\rho}$; which is immediate when $\delta(u) \leq k$, and ensured by constraints \eqref{FPSIP.R2} when $\delta(u) > k$. 
Hence, if $\rho$ is not a $k$--capacitated power dominating function, the set $N = V \setminus M(\rho)$ of unmonitored vertices is nonempty.

We show that for any $v \in N$, there exists another vertex $v' \in N$ such that $(v',v)$ is an arc of $D_F$.
Since $N$ is finite, iterating this argument yields a repeated vertex in $N$, and therefore a  cycle in $D_F$.

Fix any $v \in N$.
As $v$ is not monitored, we have $\tilde s_v = 0$, $\tilde s_u = 0$ for all $u \in N(v)$ with $\delta(u) \leq k$, and $\tilde 
w_{uv} = 0$ for all $u \in N(v)$ with $\delta(u) > k$; otherwise, rule (DR) could be applied at $v$ to monitor itself, or at $u$ to monitor $v$.
Thus, for constraint \eqref{FPSIP.R1} to hold at $v$, there must exist at least one neighbor $u \in V_P$ such that $\tilde y_{uv} = 1$, and consequently $(u,v) \in F$.
Since $v$ is not monitored, we have $u \in N$ or $u$ has another neighbor $w \in N \setminus \{v\}$; otherwise, rule (PR) could be applied at $u$ to monitor $v$.
Therefore, by defining $v' = u$ when $u \in N$ and $v' = w$ otherwise, we obtain that $(v',v) \in \psi(u,v)$ with $v' \in N$.
Since $(u,v) \in F$, the precedence $(v',v)$ is an arc of $D_F$, completing the proof.
\end{proof}

We can therefore conclude the correctness of the formulation.
 
\begin{proposition}\label{prop.FPS.IP}
\FPSIP is a correct formulation for CPDS.
\end{proposition}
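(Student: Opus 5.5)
To prove that \FPSIP is correct, I will show two things. First, every $k$-capacitated power dominating function $\rho$ yields a feasible solution of \FPSIP with objective value $|S_\rho|$. Second, every feasible solution of \FPSIP yields a $k$-capacitated power dominating function $\rho$ with $|S_\rho| = \sum_v \tilde s_v$. Together these show that the optimal values coincide and that optimal solutions map to optimal functions.

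\emph{From functions to solutions.} Let $\rho$ be a $k$-capacitated power dominating function, and fix a proper calculation of $M(\rho)=V$. A proper calculation exists: take any calculation and discard the applications that do not enlarge the monitored set. Let $R$ be the set of propagations applied by (PR) in this calculation. I set:
\begin{itemize}
\item $s_v=1$ if and only if $v\in S_\rho$;
\item $w_{uv}=1$ if and only if $u\in S_\rho$ and $v\in\rho(u)$, for $(u,v)\in A_D$;
\item $y_{uv}=1$ if and only if $(u,v)\in R$.
\end{itemize}
Constraints \eqref{FPSIP.R2} follow from $|\rho(u)|\le k$, and from $w_{vu}=0$ whenever $s_v=0$.

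For constraints \eqref{FPSIP.R1}, every $v\in V$ is monitored in the calculation in one of three ways:
\begin{itemize}
\item $v\in S_\rho$, giving $s_v=1$;
\item $v\in\rho(u)$ for some $u\in S_\rho$, giving $s_u=1$ if $\delta(u)\le k$ (using $\rho(u)\subseteq N(u)$), and $w_{uv}=1$ otherwise;
\item $v$ is the target of some $(u,v)\in R$, giving $y_{uv}=1$ with $u\in N(v)\cap V_P$.
\end{itemize}
For the FPS constraints \eqref{FPSIP.R3}, Remark \ref{remark.fps} gives directly that at most $|F|-1$ propagations of each FPS $F$ belong to $R$.

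\emph{From solutions to functions.} Let $(\tilde s,\tilde w,\tilde y)$ be feasible for \FPSIP, and define $\rho$ as in Lemma \ref{lemma.fps.constraint.generation}, applied with $\tilde\FF=\FF(G,V_P)$. By constraints \eqref{FPSIP.R2}, $\rho$ is $k$-capacitated, and $|S_\rho|=\sum_v\tilde s_v$.

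Suppose, for contradiction, that $M(\rho)\ne V$. Lemma \ref{lemma.fps.constraint.generation} then says that $F=\{(u,v):\tilde y_{uv}=1\}$ is an FPS. Since $F$ is finite, it contains a minimal FPS $F'\subseteq F$, obtained by removing elements while the set remains an FPS. Because $F'\in\FF(G,V_P)$, constraint \eqref{FPSIP.R3} for $F'$ gives $\sum_{(u,v)\in F'}\tilde y_{uv}\le |F'|-1$. But every propagation in $F'$ has $\tilde y_{uv}=1$, so the sum equals $|F'|$, which is a contradiction. Hence $\rho$ is a $k$-capacitated power dominating function.

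The only delicate point is the passage from an arbitrary FPS to a minimal one: the formulation constrains only minimal FPSs, so we need the fact that each FPS contains a minimal FPS. This holds because being an FPS is preserved under taking supersets, and $F$ is finite. The remaining substance is already carried by Lemma \ref{lemma.fps.constraint.generation} and Remark \ref{remark.fps}.
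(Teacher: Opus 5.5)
Your proof is correct and takes essentially the same route as the paper. You encode a proper calculation as a feasible point, with the FPS constraints handled by Lemma~\ref{lemma.DAG} through Remark~\ref{remark.fps}. For the converse, you invoke Lemma~\ref{lemma.fps.constraint.generation} and pass to a minimal FPS inside $F$, whose constraint would then be violated. The only cosmetic difference is that you set $w_{uv}=1$ for every $v\in\rho(u)$, rather than only for the (DR) applications actually used in the proper calculation; this works equally well for \eqref{FPSIP.R1} and \eqref{FPSIP.R2}.
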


\begin{proof}
Let $\rho$ be a $k$-capacitated power dominating function. 
Thus, there exists a finite sequence of applications of rules (DR) and (PR) such that $M(\rho) = V$, which we may assume to be proper.
We first show that there exists a feasible solution of \FPSIP that encodes $\rho$.

Consider the vector $(\tilde s, \tilde w, \tilde y) \in \{0,1\}^{V \times A_D \times A_P}$ defined as: $\tilde s_v = 1$ if and only if $v \in S_{\rho}$, for all $v \in V$; $\tilde w_{uv} = 1$ if and only if the rule (DR) is applied at $u$ to monitor $v$, for all $(u,v) \in A_D$; $\tilde y_{uv} = 1$ if and only if the rule (PR) is applied at $u$ to monitor $v$, for all $(u,v) \in A_P$.
We will see that $(\tilde s, \tilde w, \tilde y)$ verifies the constraints of \FPSIP and therefore is a feasible solution.

To prove the validity of constraints \eqref{FPSIP.R1}, let $v \in V$. 
As $M(\rho) = V$, $v$ must be monitored by rule (DR) or (PR).
If $v$ monitors itself through rule (DR), then $\tilde s_v = 1$.
But if a neighbor $u$ monitors $v$ through rule (DR), then $\tilde s_u = 1$ when $\delta(u) \leq k$, and $\tilde w_{uv} = 1$ otherwise.
Finally, if some neighbor $u \in V_P$ monitors $v$ through rule (PR), then $\tilde y_{uv} = 1$.
Thus, the left-hand side is at least 1.

For constraints \eqref{FPSIP.R2}, let $v \in V$ with $\delta(v) > k$. 
Since $\rho$ is a $k$-capacitated power dominating function, we have $|\rho(v)| \leq k$. Hence, $v$ monitors at most $k$ neighbors through rule (DR), which implies $\sum_{u \in N(v)} \tilde w_{vu} \leq k$. 
Additionally, if $v \notin S_{\rho}$, rule (DR) cannot be applied at all, implying $\tilde w_{vu} = 0$ for all $u \in N(v)$. 

For constraints \eqref{FPSIP.R3}, let $R \subseteq A_P$ be the set of propagations applied by rule (PR) in the calculation of $M(\rho)$. By Lemma \ref{lemma.DAG}, $D_R$ contains no  cycles, which guarantees that there is no FPS $F$ such that $\tilde y_{uv} = 1$ for all $(u,v) \in F$.

\medskip

Now, given a feasible solution $(\tilde s, \tilde w, \tilde y)$ of \FPSIP.
Define $S_{\rho} = \{v \in V: \tilde s_v = 1\}$ and $\rho: S_{\rho} \to \PP(V)$ by setting, for each $u \in S_{\rho}$, $\rho(u) = N(u)$ if $\delta(u) \leq k$ and $\rho(u) = \{v \in N(u): \tilde w_{uv} = 1\}$ if $\delta(u) > k$.

Suppose that $\rho$ is not a $k$--capacitated power dominating function.
By Lemma \ref{lemma.fps.constraint.generation}, the precedence digraph $D_F$, where $F = \{(u,v) \in A_P: \tilde y_{uv} = 1\}$, contains a  cycle.
Hence, $F$ is an FPS.
Therefore, $F$ contains a minimal FPS $F'$, and the FPS constraint associated with $F'$ would be violated by $(\tilde s, \tilde w, \tilde y)$, contradicting the feasibility of the solution.

\medskip

Finally, in the proposed constructions for both directions, the cardinality of $S_{\rho}$ is equal to the objective value of the solution $(\tilde s, \tilde w, \tilde y)$.
\end{proof}

In the remainder of this section, we introduce some enhancements for \FPSIP.

\subsection{Incoming and outgoing propagation constraints}\label{sec.incoming.outgoing.constraints}

We now introduce additional constraints that are not strictly required for the correctness of the formulation, but that strengthen the resulting models. 
These constraints are inspired by the work of \citet{Jovanovic2020}, and are here reinforced and adapted to consider the zero-forcing property.

The first constraints forbid redundant incoming propagations.
Although they are not strictly valid inequalities, since they may cut off feasible solutions, they only eliminate non-proper calculations of $M(\rho)$ and therefore do not affect the optimal value of \FPSIP. 
We refer to them as the \emph{incoming propagation constraints} (InP).
\begin{align}
& \sum_{u \in N(v) \cap V_P} y_{uv} \leq 1 & v \in V. \label{FPSIP.In.Prop}
\end{align}

Similarly, \citet{Jovanovic2020} propose the following constraints that forbid multiple outgoing propagations from a vertex.
\begin{align}
&\sum_{u \in N(v)} y_{vu} \leq 1 & v \in V_P.
\end{align}
When $\delta(v) = 2$, these constraints are FPS constraints, since the pair of outgoing propagations from $v$, say $F = \{(v,u),(v,w)\}$, imposes the cycle $((u,w),(w,u)) \in \CC(D_F)$ and is therefore a minimal FPS.
Otherwise, when $\delta(v) > 2$, they dominate the FPSs constraints associated with any pair of outgoing propagations from $v$.

We further strengthen these constraints as follows.
If a vertex $v \in V_P$ satisfies $\delta(v) \leq k$, then each neighbor $u$ can be monitored by applying rule (DR) instead of rule (PR).
Consequently, whenever $s_v = 1$, no outgoing propagation from $v$ is required, and we enforce $y_{vu} = 0$.
This strengthening does not necessarily hold when $\delta(v) > k$, since in that case $v$ may need to monitor some of its neighbors using rule (PR).
We refer to these constraints as the \emph{outgoing propagation constraints} (OutP).
\begin{align}
&\sum_{u \in N(v)} y_{vu} \leq 1 - s_v & v \in V_P: \delta(v) \leq k \label{contraints.OutP.first} \\ 
&\sum_{u \in N(v)} y_{vu} \leq 1 & v \in V_P: \delta(v) > k.
\end{align}
As with constraints (InP), constraints (\ref{contraints.OutP.first}) are not strictly valid inequalities but only eliminate calculations of $M(\rho)$ with unnecessary propagations.

\subsection{Formulation based on extended forbidden propagation sets}

We use EFPSs to derive an alternative and tighter formulation for CPDS.
We introduce the following constraints, called \emph{EFPS constraints}, which limit the number of propagations of any EFPS that can be applied in a solution.
\begin{align}\label{FPSIP.EFPS}
& \sum_{(u,v) \in \varphi(C)} y_{uv} \leq |C|-1 & C \in \CC(D).
\end{align}
As a consequence of Propositions \ref{prop.efps}, this family of inequalities is valid for any solution which involves a proper calculation of $M(\rho)$.

By Remark \ref{remark.fps.replaced.by.efps}, every FPS constraint is dominated by an EFPS constraint.
Then, FPS constraints can be replaced with EFPS constraints, resulting in a tighter formulation for CPDS.
For completeness, we now present this alternative formulation.
\begin{align*}
\text{(\EFPSIP)}\ \min\  & \sum_{v \in V} s_v\\
s.t.\  & s_v + \sum_{\mathclap{\substack{u \in N(v):\\\delta(u) \leq k}}} s_u + \sum_{\mathclap{\substack{u \in N(v):\\\delta(u) > k}}} w_{uv} + \sum_{\mathclap{u \in N(v) \cap V_P}} y_{uv} \geq 1 & v \in V \\
& \sum_{u \in N(v)} w_{vu} \leq ks_v & v \in V: \delta(v) > k \\
& \sum_{(u,v) \in \varphi(C)} y_{uv} \leq |C|-1 & C \in \mathscr{C}(D)\\
& s \in \{0,1\}^V,\ w \in \{0,1\}^{A_D},\ y \in \{0,1\}^{A_P}. \notag
\end{align*}

\subsection{Solving \FPSIP and \EFPSIP: dynamic generation of constraints}\label{sec.constraint.generation}

The exponential number of constraints in \FPSIP and \EFPSIP motivates a \emph{lazy-constraint} approach embedded within a generic framework for solving ILP.
For \FPSIP, starting from a (possibly empty) subset of FPS constraints associated with a subset $\tilde \FF \subseteq \FF(G, V_P)$, the algorithm iteratively solves \FPSIPF{$\tilde \FF$} and adds missing constraints only when these are violated by an obtained solution.
We present in Algorithm \ref{algo.lazy.separation} a polynomial-time separation procedure for FPS constraints, supported by the results introduced in the previous sections.

\begin{algorithm}
\caption{Separation of FPS constraints}\label{algo.lazy.separation}
\begin{algorithmic}[1]
\Require integer solution $(\tilde s, \tilde w,\tilde y)$ of \FPSIPF{$\tilde \FF$}
\Ensure minimal FPS not in $\tilde \FF$ whose associated constraint is violated (if any) 
\State $\rho \gets$ $k$-capacitated function defined by $(\tilde s, \tilde w,\tilde y)$
\State $M(\rho) \gets$ monitored set
\If{$M(\rho) = V$}
    \Return none
\EndIf
\State $C \gets$ find a  cycle in $D_{F}$, where $F = \{(u, v) \in A_P: \tilde y_{uv} = 1\}$ [Lemma \ref{lemma.fps.constraint.generation}]
\State $\tilde C = (e_1,\ldots,e_r) \gets $ chordless  cycle obtained from $C$
\State $\tilde F \gets \{p_i \in \varphi(e_i) \cap F: e_i \in \tilde C\}$ [Proposition \ref{prop.characterization.minimal.FPS}]
\State \Return $\tilde F$
\end{algorithmic}
\end{algorithm}

This algorithm begins by constructing the $k$-capacitated function defined by a feasible solution $(\tilde{s}, \tilde{w}, \tilde{y})$ of \FPSIPF{$\tilde \FF$}, according to Lemma \ref{lemma.fps.constraint.generation}. 
The monitored set $M(\rho)$ is then calculated by an iterative procedure that applies rules (DR) and (PR) whenever possible, until no additional vertices can be monitored.

If $M(\rho) = V$, then $\rho$ is a $k$-capacitated power dominating function. 
Note that this does not necessarily imply that $(\tilde{s}, \tilde{w}, \tilde{y})$ is a feasible solution of \FPSIP, as it may contain redundant or unnecessary propagations.
However, a feasible solution with the same objective value can always be obtained from a proper calculation of $M(\rho)$, following Proposition \ref{prop.FPS.IP}.

Otherwise, when $M(\rho) \neq V$, we identify a cycle $C$ in $D_F$, where $F = \{(u,v) \in A_P : \tilde{y}_{uv} = 1\}$, whose existence is guaranteed by Lemma \ref{lemma.fps.constraint.generation}.
Following the proof of this lemma, the construction of $C$ starts from an arbitrary vertex $v$ in the set $N$ of unmonitored vertices and iteratively moves backwards to a randomly selected unmonitored predecessor of $v$ in $D_F$.
If $C$ has chords, we greedily trim $C$ until a chordless cycle $\tilde C$ is obtained: whenever a chord $(u,v)$ is detected, we take the cycle defined by $(u,v)$ and the path from $v$ to $u$.

By Proposition \ref{prop.characterization.minimal.FPS}, any FPS $\tilde F \subseteq F$ that minimally imposes $\tilde C$ is a minimal FPS.
We construct $\tilde F$ by selecting, for each arc $e_i \in \tilde C$, a random propagation $p_i \in \varphi(e_i) \cap F$, see Proposition \ref{prop.minimally.impose}.
Finally, $\tilde F$ is returned.

To speed up the generation of FPS constraints, we allow generating multiple minimal FPSs in a single execution. 
To this end, we construct a cycle starting from each vertex in $N$. 
Thus, at most $|N|$ distinct cycles are generated, and a minimal FPS is derived from each cycle.

Algorithm \ref{algo.lazy.separation} can be easily adapted to generate EFPS constraints for the formulation \EFPSIP, provided that some additional information is stored.
Specifically, the precedence digraph $D$ and the mapping $\varphi$ are calculated once during a preprocessing stage.
Then, in lines 6 and 7, instead of obtaining and returning $\tilde F$, we obtain and return $\varphi(\tilde C)$.

Although the lazy-constraint approach for \FPSIP can be initialized with an empty subset of FPS constraints, preliminary experiments indicate that very short cycles are frequently identified during the early stages of the separation process. 
In Section \ref{sec.experiments}, we therefore evaluate initialization with the FPSs constraints associated with the set $\mathcal{F}_2(G,V_P)$ of minimal FPSs that impose 2-cycles (see Proposition \ref{prop.fps.2-cycles}). 
For \EFPSIP, we consider the analogous set $\mathcal{C}_2(D)$ of EFPSs defined by 2-cycles in $D$.

\section{Adapting ILP formulations from the literature}\label{sec.formulations.from.literature}

In this section, we examine the most competitive ILP formulations for solving PDS from the literature and show their adaptation to CPDS.
These adapted formulations will be used for comparison in computational experiments.

\subsection{Formulation \BRIIP}

This ILP formulation is adapted from \citet{Brimkov2019} and is denoted by \BRIIP.
Like \FPSIP, the authors consider binary variables $s_v$ for each $v \in V$ and $y_{uv}$ for each $(u,v) \in A_P$; however, their variables $y_{uv}$ indicate that $u$ monitors $v$ without specifying which rule, (DR) or (PR), is applied.
Rather than our FPS-based approach, they consider integer variables $x_v \in \{0,\ldots,T\}$ for each $v \in V$, explicitly representing the timestep at which $v$ becomes monitored, and use big-$M$ constraints to enforce the temporal precedences that arise from the application of the rules.
We propose the introduction of binary variables $w_{uv}$ for each $(u,v) \in A_D$ to handle capacity and propose the following constraints.
{\small
\begin{align}
min\  & \sum_{v \in V} s_v \label{BRIIP.ObjFun}\\ 
s.t.\ & s_v + \sum_{\mathclap{\substack{u \in N(v):\\\delta(u) \leq k}}} s_u + \sum_{\mathclap{\substack{u \in N(v):\\\delta(u) > k}}} w_{uv} + \sum_{\mathclap{u \in N(v) \cap V_P}} y_{uv} \geq 1 & v \in V \label{BRIIP.R1}\\
& \sum_{u \in N(v)} w_{vu} \leq ks_v & v \in V: \delta(v) > k \label{BRIIP.R2}\\
& x_w - x_v + (T+1)y_{uv} \leq T & (u,v) \in A_P,\ w \in N[u] \setminus \{v\} \label{BRIIP.R3}\\
& s \in \{0,1\}^V,\ w \in \{0,1\}^{A_D},\ y \in \{0,1\}^{A_P}, x \in \{0,\ldots,T\}^V. \span \notag
\end{align}
}

Constraints \eqref{BRIIP.R1} and \eqref{BRIIP.R2} are equivalent to constraints \eqref{FPSIP.R1} and \eqref{FPSIP.R2} of \FPSIP, respectively.
Constraints \eqref{BRIIP.R3} are used to enforce temporal precedences, but only for rule (PR).
In contrast to the original formulation, the right-hand side of constraints \eqref{BRIIP.R3} does not include the term $(T+1)s_u$ when $w \neq u$, since $u$ may belong to $S_{\rho}$ and still monitor $w$ through rule (PR).
For computational experiments, we set $T = |V|$, as in the computational experiments of \citet{Brimkov2019}. 

\subsection{Formulation \JOVIP}

The next ILP formulation, denoted \JOVIP, is based on \citet{Jovanovic2020}.
The authors consider the same variables as in \BRIIP, but incorporate additional inequalities.
In particular, constraints \eqref{JOVIP.R1} and \eqref{JOVIP.R2} force a vertex $v$ to be monitored at timestep 1, i.e., $x_v = 1$, when rule (DR) is applied.
Constraints \eqref{JOVIP.R3} and \eqref{JOVIP.R4} prevent more than one incoming (respectively, outgoing) propagation at the same vertex.
Constraints \eqref{JOVIP.R5} forbid simultaneous propagations in opposite directions (in analogy with FPSs, they forbid the first case of Proposition \ref{prop.fps.2-cycles}).
Constraints \eqref{JOVIP.R6} enforce temporal precedences, analogously to constraints \eqref{BRIIP.R3} in \BRIIP.
In addition, to handle capacity, we adapt constraints \eqref{JOVIP.R7} and \eqref{JOVIP.R8} directly from the original formulation, and introduce constraints \eqref{JOVIP.R9}, which are equivalent to \eqref{FPSIP.R2} in \FPSIP.
{
\small
\begin{align}
min\ & \sum_{v \in V} s_v \label{JOVIP.ObjFun}\\
s.t.\ & x_v \leq s_v + M(1-s_v) & v \in V \label{JOVIP.R1}\\
& x_v \leq s_u + M(1-s_u) & v \in V, u \in N(v) : \delta(u) \leq k \label{JOVIP.R2}\\
& \sum_{u \in N(v)} y_{vu} \leq 1 & v \in V_P \label{JOVIP.R3}\\
& \sum_{u \in N(v) \cap V_P} y_{uv} \leq 1 & v \in V \label{JOVIP.R4}\\
& y_{vu} + y_{uv} \leq 1 & (v,u),(u,v) \in A_P \label{JOVIP.R5}\\
& x_v \geq x_w + 1 - M(1-y_{uv}) & (u,v) \in A_P,\ w \in N[u] \setminus \{v\} \label{JOVIP.R6}\\
& x_v \leq w_{uv} + M(1-w_{uv}) & (u,v) \in A_D \label{JOVIP.R7}\\
& x_v \leq M(s_v + \sum_{\mathclap{\substack{u \in N(v):\\\delta(u) \leq k}}} s_u + \sum_{\mathclap{\substack{u \in N(v):\\\delta(u) > k}}} w_{uv} + \sum_{\mathclap{u \in N(v) \cap V_P}} y_{uv}) & v \in V \label{JOVIP.R8}\\
& \sum_{u \in N(v)} w_{vu} \leq ks_v & v \in V: \delta(v) > k \label{JOVIP.R9}\\
& s \in \{0,1\}^V,\ w \in \{0,1\}^{A_D},\ y \in \{0,1\}^{A_P}, x \in \{1,\ldots,|V|\}^V. \span \notag
\end{align}
}

The value of $M$ used in the computational experiments is not explicitly specified in \citet{Jovanovic2020}; therefore, we set $M = T$, as in \BRIIP.

\subsection{Formulation \FORTIP}

The last ILP formulation adapts the developments of \citet{Bozeman2019}.
In PDS, feasible solutions are simply sets $S \subseteq V$, called power dominating sets.
This formulation is motivated by a structural characterization of power dominating sets involving certain intersection conditions.

Let $G=(V,E)$ be a graph.
For a set $F \subseteq V$, the \emph{open-neighborhood} (resp. \emph{closed-neighborhood}) of $F$ is defined as $N(F) = (\bigcup_{v \in F} N(v)) \setminus F$ (resp. $N[F] = \bigcup_{v \in F} N[v]$).
When $F$ is nonempty and every vertex $v \in N(F)$ satisfies $|N(v) \cap F| \geq 2$, $F$ is a \emph{fort}.
Any power dominating set must contain at least one vertex in the closed-neighborhood of every fort $F$; otherwise, the vertices in $F$ cannot be monitored.
If $\FORT(G)$ denotes the family of all minimal forts of $G$, then
a set $S \subseteq V$ is a power dominating set of $G$ if and only if $S \cap N[F] \neq \emptyset$ for all $F \in \FORT(G)$.
Based on this characterization, the formulation of \citet{Bozeman2019} consists of a single exponential-size family of constraints:
\begin{align}
&\sum_{v \in N[F]} s_v \geq 1 & F \in \FORT(G).
\end{align}

For CPDS, we establish an analogous characterization based on forts for a $k$-capacitated power dominating function $\rho$.
This characterization is motivated by the fact that it is not sufficient for a vertex $u \in N(F)$ to belong to $S_{\rho}$ to monitor some vertex in $F$ through rule (DR); rather, $u$ must actually decide to monitor a vertex in $F$, that is, $\rho(u)$ must intersect $F$.

\begin{proposition}\label{prop.fort.cpds}
Let $\rho$ be a $k$-capacitated function.
Then, $\rho$ is a $k$-capacitated power dominating function if and only if, for every minimal fort $F \in \FORT(G)$, $S_{\rho} \cap F \neq \emptyset$ or there exists $u \in N(F) \cap S_{\rho}$ such that $\rho(u) \cap F \neq \emptyset$. 
\end{proposition}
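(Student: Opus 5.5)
The plan is to prove both directions by contrapositive, using the monitored set $M(\rho)$ and its complement. Throughout I will read the fort condition in the way the propagation rule (PR) requires. A vertex of $N(F)$ with exactly one neighbor in $F$ must be able to propagate, so I work in the setting where (PR) is available at such vertices, i.e. $V_P = V$ as in \citet{Bozeman2019}. This is an assumption on my part. I expect this point to be the main obstacle, and I return to it at the end.

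For the forward direction, suppose some minimal fort $F \in \FORT(G)$ violates the condition: $S_{\rho} \cap F = \emptyset$, and $\rho(u) \cap F = \emptyset$ for every $u \in N(F) \cap S_{\rho}$. I will show by induction along any calculation of $M(\rho)$ that no vertex of $F$ is ever monitored. Suppose instead that $v \in F$ is the first vertex of $F$ to become monitored, and consider how it happens.
\begin{itemize}
\item It cannot happen by (DR) at $v$ itself, since $v \notin S_{\rho}$.
\item It cannot happen by (DR) at a neighbor $u \in S_{\rho}$ with $v \in \rho(u)$. Such a $u$ is not in $F$, because $S_\rho \cap F = \emptyset$. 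So $u \in N(F)$, and then $v \in \rho(u) \cap F$ contradicts the assumption.
\item It cannot happen by (PR) at a monitored $u \in N(v)$. Since $v$ is the first vertex of $F$ monitored, $u \notin F$, so $u \in N(F)$. The fort property gives a second neighbor $v' \in F \setminus \{v\}$ of $u$, which is still unmonitored, so the precondition $N(u)\setminus\{v\} \subseteq M(\rho)$ fails.
\end{itemize}
Hence $F \cap M(\rho) = \emptyset$. Since forts are nonempty, $M(\rho) \neq V$.

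For the converse, assume $\rho$ is not a $k$-capacitated power dominating function, and let $U = V \setminus M(\rho) \neq \emptyset$. First I show that $U$ is a fort. Every $x \in N(U)$ lies in $M(\rho)$. If such an $x$ had exactly one neighbor $v \in U$, then all of $N(x)\setminus\{v\}$ would be monitored. Rule (PR) at $x$, which is available since $x\in V_P$, would then force $v \in M(\rho)$, a contradiction. So $U$ is a fort, and it contains some minimal fort $F \in \FORT(G)$, $F \subseteq U$. Next I check that $F$ violates the condition. Every vertex of $F$ is unmonitored, so none is in $S_{\rho}$, by (DR) at itself. Take any $u \in N(F) \cap S_{\rho}$. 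If $\rho(u)$ contained a vertex of $F$, that vertex would be monitored by (DR) at $u$, which is impossible. So $\rho(u) \cap F = \emptyset$, and the condition fails for $F$, completing the contrapositive.

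The main obstacle is the role of $V_P$. The converse direction genuinely needs every vertex of $N(U)$ to be able to propagate. If $V_P \neq V$, the complement of $M(\rho)$ need not be a fort in the stated sense, and the statement is then false. For example, take a path $a - b - c$ with $V_P = \emptyset$ and $S_\rho = \{a\}$, $\rho(a) = \{b\}$. Then $M(\rho) = \{a,b\}$, but the only fort is $\{a,b,c\}$, which meets $S_\rho$. I would therefore state this hypothesis explicitly. Alternatively, one could generalize the fort definition by requiring $|N(x) \cap F| \geq 2$ only for $x \in N(F) \cap V_P$; the two arguments above then go through verbatim. The forward direction only uses the fort property at propagating vertices and is unaffected either way.
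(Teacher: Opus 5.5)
Your proof is correct and follows the paper's argument essentially step by step. In the forward direction you take the first vertex of a violating fort to be monitored and rule out both (DR) and (PR). In the converse you show that $V \setminus M(\rho)$ is a fort and extract a minimal fort that violates the condition.

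Your caveat about $V_P$ is well founded. The paper's converse step, where an outgoing propagation from $v \in N(F)$ ``could be applied'', tacitly needs $v \in V_P$, so your explicit hypothesis or your relativized fort definition is the right repair. The one slip is in your path example: $\{a,c\}$ is also a fort, and in fact the unique minimal one, since $b$ has both $a$ and $c$ as neighbors. It contains $a \in S_{\rho}$, so the counterexample still stands.
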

\begin{proof}
Let $\rho$ be a $k$-capacitated power dominating function, and consider a proper calculation of $M(\rho) = V$.
For the sake of contradiction, suppose there exists $F \in \FORT(G)$ such that $S_{\rho} \cap F = \emptyset$ and, for every $u \in N(F) \cap S_{\rho}$, $\rho(u) \cap F = \emptyset$.

By its definition, a fort is nonempty, so $F \neq \emptyset$.
Let $v$ be the first vertex in $F$ that becomes monitored during the calculation of $M(\rho)$.
Under our assumptions, $v$ can only be monitored by applying rule (PR), say through a propagation $(u,v)$ for some $u \in N(v) \cap V_P$.

Since $v$ is the first vertex in $F$ to be monitored, it follows that $u \notin F$.
But then $u \in N(F)$, and the definition of fort ensures that $u$ must also be adjacent to some other vertex $w \in F \setminus \{v\}$.
Therefore, the propagation $(u,v)$ imposes the precedence $(w,v)$, but $v$ is monitored before $w$, which leads to a contradiction.

\medskip

To complete the proof, we demonstrate the converse implication.
Let $\rho$ fail to be a $k$-capacitated power dominating function, i.e., $M(\rho) \neq V$.
Define $F = V \setminus M(\rho) \neq \emptyset$.
We first show that $F$ is a fort.

If $N(F) = \emptyset$, then $F$ is trivially a fort.
Otherwise, let $v \in N(F)$.
We claim that $v$ must actually have more than one neighbor in $F$; otherwise, an outgoing propagation from $v$ could be applied to increase the size of $M(\rho)$, since $v$ and all of its remaining neighbors (if any) lie in the complement of $F$ and are therefore already monitored.
Hence, $F$ is a fort.

Clearly, $F$ contains some minimal fort $F'$.
Next, $F'$ must satisfy $S_{\rho} \cap F' = \emptyset$ and, for every $u \in N(F') \cap S_{\rho}$, we must have $\rho(u) \cap F' = \emptyset$; otherwise, $F'$ would contain a vertex that could be monitored by rule (DR), and the size of $M(\rho)$ could again be increased.
\end{proof}

This result allows us to propose the following formulation for CPDS, denoted by \FORTIP. Constraints \eqref{FORTIP.R1} are a direct translation of Proposition \ref{prop.fort.cpds}, and constraints \eqref{FORTIP.R2} are equivalent to \eqref{FPSIP.R2} in \FPSIP.
{\small
\begin{align}
min\  & \sum_{v \in V} s_v \label{FORTIP.ObjFun}\\ 
s.t.\ & \sum_{v \in F} s_v + \sum_{\mathclap{\substack{v \in N(F):\\ \delta(v) \leq k}}} s_v + \sum_{\mathclap{\substack{v \in N(F):\\ \delta(v) > k }}} \hspace{10pt} \sum_{u \in N(v) \cap F} w_{vu} \geq 1 & F \in \FORT(G) \label{FORTIP.R1} \\
& \sum_{u \in N(v)} w_{vu} \leq ks_v & v \in V: \delta(v) > k \label{FORTIP.R2}\\
& s \in \{0,1\}^V,\ w \in \{0,1\}^{A_D}. \span \notag
\end{align}
}

To solve this formulation, we adapt the lazy-constraint separation of \citet{Blasius2024} for PDS to dynamically generate constraints \eqref{FORTIP.R1}, with adjustments to account for limited capacity.
As noted by the authors, to speed up this separation it is crucial to avoid computing each monitored set from scratch; instead, we apply local changes to previous computations to quickly unmonitor or monitor vertices.

\medskip

Other ILP formulations in the PDS literature were not considered for comparison for the following reasons. 
The formulation by \citet{Aazami2010} was corrected and evaluated by \citet{Brimkov2019}, who concluded that Aazami's model is significantly outperformed.
The single-level ILP formulation by \citet{Carvalho2018} is only able to solve instances with up to 150 and 300 vertices, which is considerably smaller than the instances considered in this work.
Some additional models without correctness proofs were also omitted.
 
\section{Computational experiments}\label{sec.experiments}

In this section, we conduct computational experiments on 26 standard power network benchmark instances, ranging from 200 to approximately 14.000 vertices, all extracted from the open-source tools pandapower \citep{PANDAPOWER} and matpower \citep{MATPOWER}.
The names of the considered power networks are shown on the x-axis of Figure \ref{fig.FPSIP.vs.EFPSIPS} and usually contain the number of vertices as a substring, except for Texas and Western, which have 2000 and 10024 vertices, respectively.
For each power network, we define a CPDS instance consisting of its underlying graph, its set of zero-injection vertices, and a capacity parameter $k$.
We vary $k$ from 0 to a value $k^*$, which is the minimum capacity for which CPDS and PDS share the same optimal value.
Note that $k^*$ may be lower than the maximum degree of the graph.
As a result, a total of 262 instances are considered. 

We use Gurobi 12.0.3 \citep{GUROBI} to solve ILP, called through the \texttt{C++20} API.
Formulations \BRIIP and \JOVIP are solved straightforwardly as black-box models, whereas user-defined lazy-constraint callbacks are provided for \FPSIP, \EFPSIP, and \FORTIP.
This approach differs from \citet{Blasius2024}, who use Gurobi only to solve linear relaxations, while the remaining branch-and-bound tree is handled separately.
In addition, we do not incorporate their decomposition and reduction techniques.

We used a machine equipped with Ubuntu 22.04.5 LTS, 8 GB of RAM, and an Intel(R) Core(TM) i7-9700 CPU at 3.00GHz with 8 threads (all available).
Gurobi is used with default parameters, except for a time limit of 900 s per instance.
Five \emph{runs} (executions) are performed for each instance and model in order to mitigate variability in computation times.

A concise summary of the computational results is reported in Table \ref{tab.summary}.
Each row reports values averaged over all instances and runs, aggregating a total of 1310 observations.
The table reports, for each model, the number of variables, the number of constraints initially in the formulation and the number of lazy-constraints added, the percentage of runs that reach optimality, referred to as \emph{optimality rate}, the execution time (assigning 900 s to runs that hit the time limit), the cumulative time spent in lazy-constraint separation, and the final relative gap (averaged over runs that reached the time limit, using the number of vertices as an upper bound when no feasible solution was found within the time limit).
Models \EFPSIP-InP, \EFPSIP-OutP, and \EFPSIP-InP-OutP incorporate into \EFPSIP the incoming and outgoing propagation constraints (InP) and (OutP) from Section \ref{sec.incoming.outgoing.constraints}, individually and jointly, while \EFPSIP-OutP-Init additionally includes the initialization of EFPS constraints for 2-cycles described in Section \ref{sec.constraint.generation}.
The results reported in each row will be discussed incrementally in detail as each experiment is introduced.

Our code and the full table of results summarized in Table \ref{tab.summary} are available online at: \url{https://github.com/maurolucci/cpds}.

\begin{table}[]
\centering \footnotesize
\renewcommand{\arraystretch}{1.1}
\setlength{\tabcolsep}{4pt}
\begin{tabular}{lccccccc}
\toprule
& & \multicolumn{2}{c}{\textbf{\#Constraints}} & & \multicolumn{2}{c}{\textbf{Time}} & \\
\cmidrule(l{2pt}r{2pt}){3-4} \cmidrule(l{2pt}r{2pt}){6-7}
& \textbf{\#Variables} & Initial & Lazy & \textbf{Optimality} & Total & Lazy & \textbf{Gap} \\
\textbf{Model} & ($\times 10^3$) & ($\times 10^3$) & & (\%) & (s) & (s) & (\%) \\
\midrule
\FPSIP & 13.2 & 5.6 & 5461 & 90.2 & 109.9 & 0.17 & 0.78 \\
\EFPSIP & 13.2 & 5.6 & 5185 & 90.9 & 99.9 & 0.15 & 0.19 \\
\EFPSIP-InP & 13.2 & 8.5 & 3207 & 90.5 & 104.3 & 0.16 & 0.20 \\
\EFPSIP-OutP & 13.2 & 7.0 & 134 & 94.1 & 61.0 & 0.035 & 0.17 \\
\EFPSIP-InP-OutP & 13.2 & 10.0 & 128 & 94.0 & 63.2 & 0.035 & 0.19 \\
\EFPSIP-OutP-Init & 13.2 & 15.8 & 17 & \textbf{94.4} & \textbf{60.8} & 0.021 & 0.18 \\
\midrule
\BRIIP & 17.8 & 26.5 & -- & 92.4 & 103.3 & -- & 0.24 \\
\JOVIP & 17.8 & 51.4 & -- & 92.0 & 141.5 & -- & 0.20 \\
\FORTIP & 8.5 & 1.0 & 22096 & 53.5 & 431.7 & 1.0 & 27.81 \\
\bottomrule
\end{tabular}
\caption{Summary of the average computational results for each model.}
\label{tab.summary}
\end{table}

\subsection{\FPSIP vs. \EFPSIP}

The first experiment compares \FPSIP and \EFPSIP.
From Table \ref{tab.summary}, \FPSIP solves 1182 out of 1310 runs to optimality (90.2 \%), which is slightly improved by \EFPSIP, with 1191 (\mbox{90.9 \%}).
The average execution time is 109.9 s for \FPSIP, compared to 99.9 s for \EFPSIP, corresponding to a 9.1 \% reduction.
The average gap is 0.78 \% for \FPSIP and 0.19 \% for \EFPSIP, representing a 75.6 \% improvement.

Figure \ref{fig.FPSIP.vs.EFPSIPS.time} reports the average execution times  of the models as a function of the number of vertices in each power network, displayed on a logarithmic scale; each marker corresponds to the average over all capacities and runs for a given network and model.
As expected, execution times tend to increase with the number of vertices.
Despite minor differences, both models display similar trends, possibly as a consequence of averaging over different capacities.
Therefore, we provide additional plots with a finer level of disaggregation.

Figure \ref{fig.FPSIP.vs.EFPSIPS.best-performing} displays, for each instance (power network and capacity), a marker whose color and shape indicate the best-performing model.
For a given instance, a model is considered best if it solves the largest number of runs to optimality across the five runs; ties are resolved, whenever possible, by the lowest average execution time, followed by the lowest average final upper bound and then the lowest average final lower bound.
When ties cannot be resolved, the instance is labeled as a ``tie''.
In addition, the marker size encodes the optimality rate of the best-performing model across those five runs, while the rightmost bar shows the percentage of instances won by each model.
Instances with large networks and capacities around $k=2$ appear to be the most challenging, and overall \EFPSIP emerges as the best-performing model in 72.1 \% of the instances, whereas \FPSIP is the best in the remaining 27.9 \%.

\begin{figure}
\begin{subfigure}{0.47\textwidth}
\includegraphics[width=\linewidth]{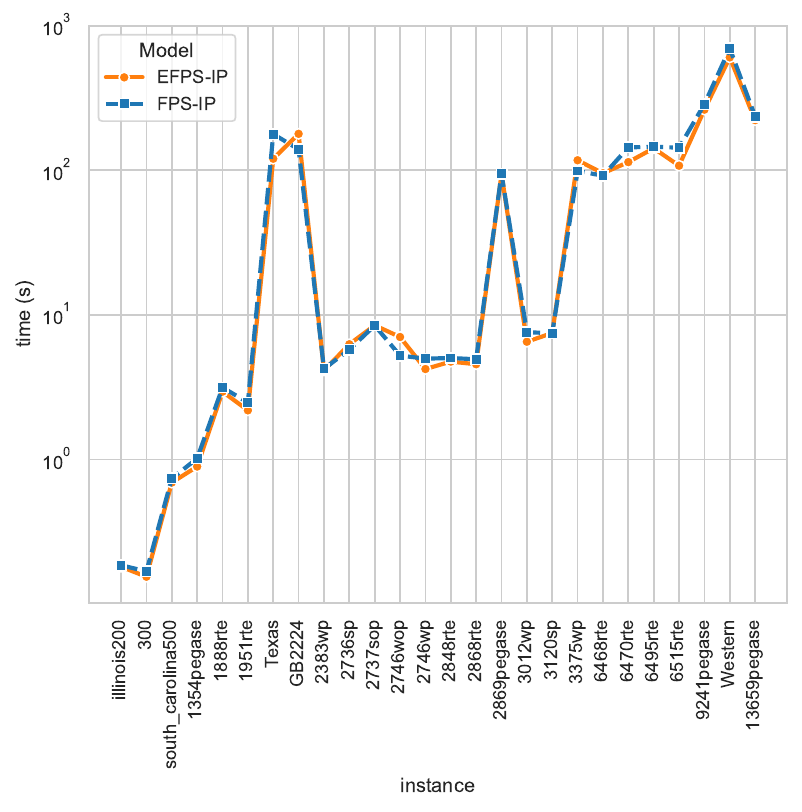}
\caption{Average execution time over all capacities by power network}
\label{fig.FPSIP.vs.EFPSIPS.time}
\end{subfigure}
\hfill
\begin{subfigure}{0.49\textwidth}
\includegraphics[width=\linewidth]{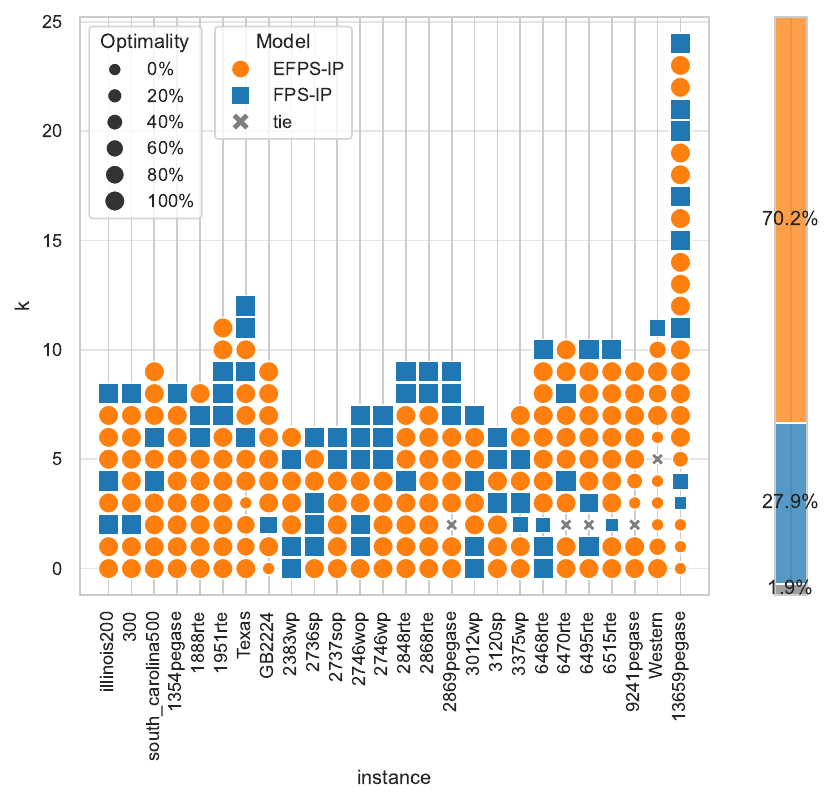}
\caption{Best-performing model by instance (power network and capacity)}
\label{fig.FPSIP.vs.EFPSIPS.best-performing}
\end{subfigure}

\caption{Results of the comparison between \FPSIP and \EFPSIP}
\label{fig.FPSIP.vs.EFPSIPS}

\end{figure}

Finally, each plot in Figure \ref{fig.FPS.vs.EFPS.grid} depicts, for a given network, the evolution of execution time across the different capacities, averaged over the five runs.
For space reasons, only approximately half of the networks are displayed, selected in an alternating manner.
Most networks exhibit a common pattern, with a peak at $k=2$ followed by decreasing times for larger capacities.
At this level of disaggregation, narrow differences between \FPSIP and \EFPSIP emerge: in most cases, \EFPSIP achieves lower execution times, although some exceptions can be observed.
These results motivate our choice to focus on \EFPSIP in the remainder of this section.

\begin{figure}
    \centering
    \includegraphics[width=\linewidth]{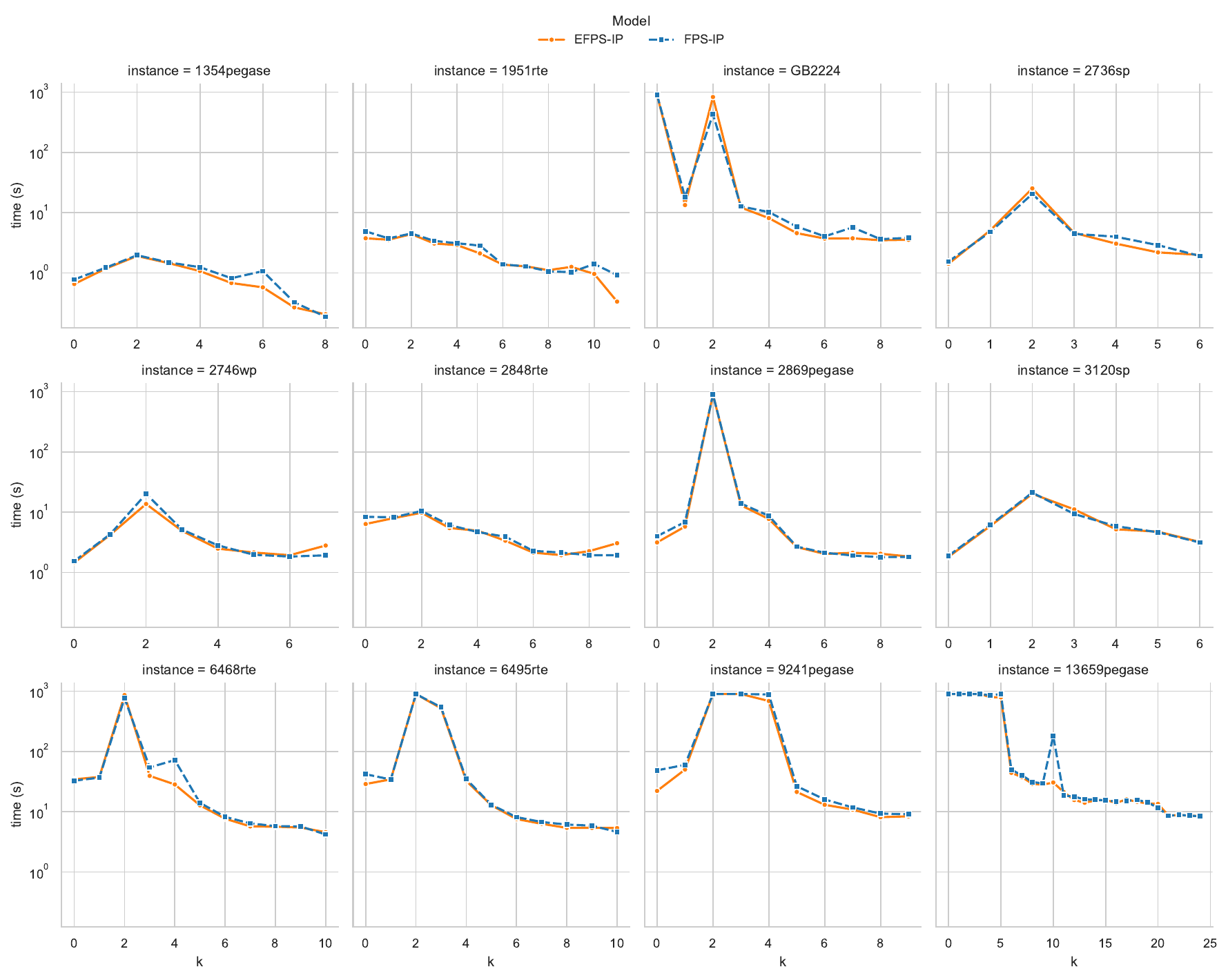}
    \caption{Evolution of the execution time for \EFPSIP and \EFPSIP with respect to capacity for selected power networks.}
    \label{fig.FPS.vs.EFPS.grid}
\end{figure}

\subsection{\EFPSIP: model tightening and initial constraints}

The second experiment focuses on evaluating the impact of the potential improvements for \EFPSIP discussed throughout this work: specifically, the incorporation of valid inequalities, redundancy-breaking constraints, and the initialization of the lazy-constraint generation framework.

We first analyze the effect of adding the incoming and outgoing propagation constraints, both individually and jointly.
From Table \ref{tab.summary}, incorporating InP contraints appears to be counterproductive, as both the optimality rate and the execution time deteriorate.
In contrast, the results obtained with OutP constraints stand out.
Compared with \EFPSIP, the number of runs that reach optimality increases from 1191 (90.9 \%) to 1233 (94.1 \%), corresponding to a 3.5 \% improvement, while the execution time decreases from 99.9 s to 61.0 s, yielding a 38.9 \% improvement.
Moreover, a noteworthy effect on lazy-separation performance is observed when these constraints are introduced.
Both InP and OutP constraints substantially reduce the need for separation, as the average number of lazy constraints added decreases from 5185 to 3207 with InP constraints, and to only 134 with OutP constraints.
Overall, the cumulative time spent in the lazy-separation routine is negligible, remaining below 0.17 second on average.
Combining InP and OutP constraints does not provide any additional benefit beyond that obtained with OutP alone.

These results are consistent with the plot in Figure \ref{fig.Model.Tightening},
where two distinct trends appear depending on whether OutP constraints are included.

The impact of initialization is now evaluated on the best-performing model, namely \EFPSIP-OutP.
Up to this point, an empty set of EFPS constraints was initially considered, and we now propose an initialization that includes all EFPS constraints associated with any 2-cycles in the precedence digraph.
Compared with \EFPSIP-OutP, the results reported in Table \ref{tab.summary} show subtle enhancements in terms of the optimality rate and execution time when initialization is used.
Besides, the number of initial constraints approximately doubles and the number of lazy constraints added is reduced by nearly a factor of eight.
Additionally, when execution times are disaggregated by network, modest time reductions emerge for some of them, as illustrated in Figure \ref{fig.Initial.Constraints}.


\begin{figure}

\begin{subfigure}{0.48\textwidth}
\includegraphics[width=\linewidth]{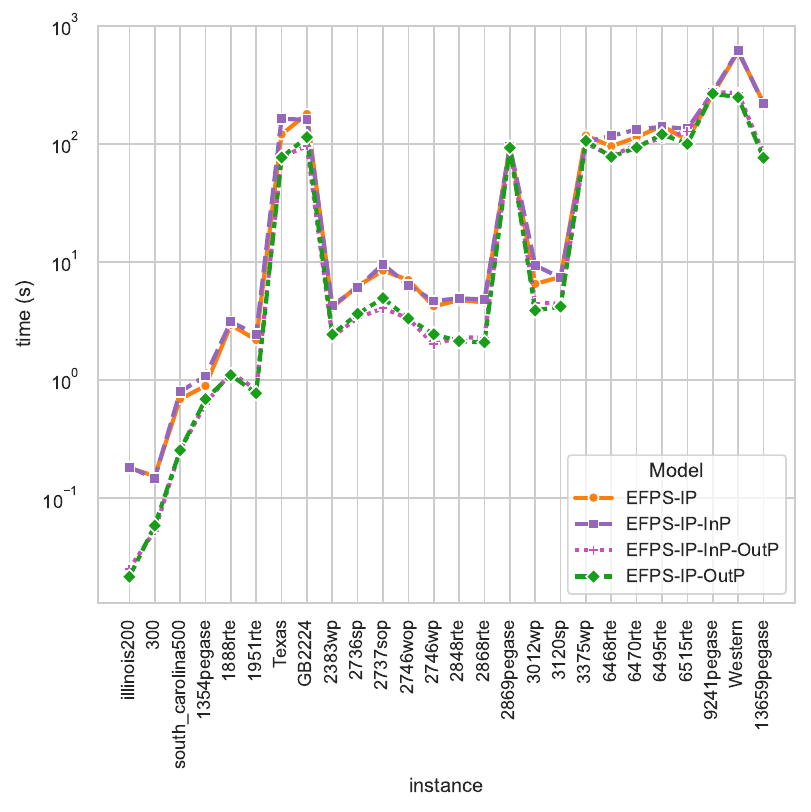}
\caption{Average execution time by power network for \EFPSIP with and without constraints (OutP) and (InP)}
\label{fig.Model.Tightening}
\end{subfigure}
\hfill
\begin{subfigure}{0.48\textwidth}
\includegraphics[width=\linewidth]{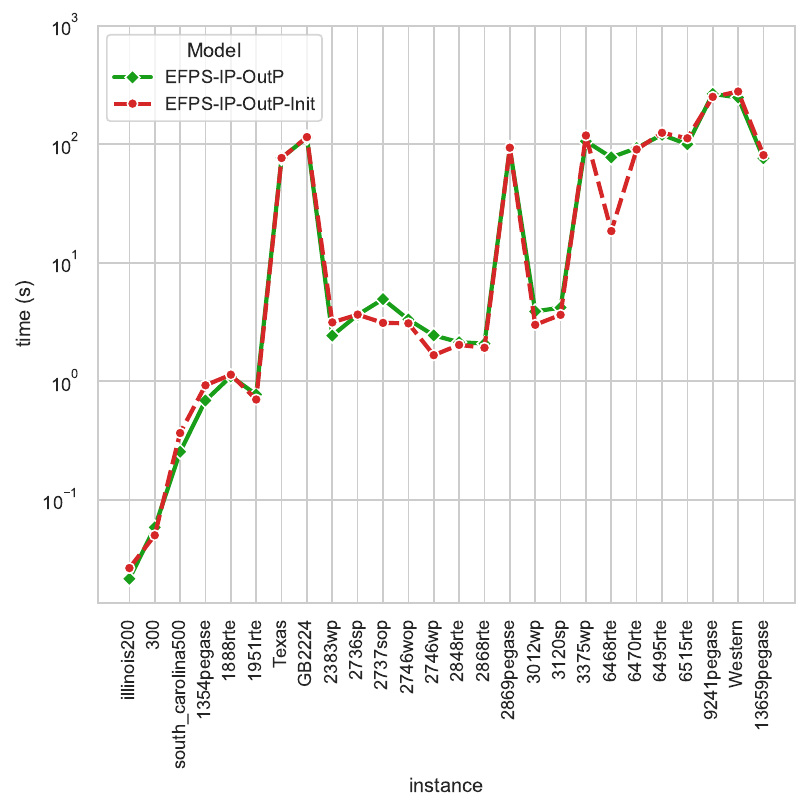}
\caption{Average execution time by power network for \EFPSIP-OutP with and without initial constraints}
\label{fig.Initial.Constraints}
\end{subfigure}

\caption{Results of the experiment with model tightening and initial constraints}
\label{fig.Model.Tightening.Initial.Constraints}
\end{figure}

\subsection{\EFPSIP vs. models adapted from the literature}

We finally compare our developments with the ILP formulations adapted from the literature.
We consider the best-performing model from the previous experiment, namely \EFPSIP-OutP-Init, together with the models introduced in Section \ref{sec.formulations.from.literature}, i.e. \BRIIP, \JOVIP, and \FORTIP.

According to the summary results in Table \ref{tab.summary}, \EFPSIP-OutP-Init achieves the highest optimality rate and the lowest execution time: 94.4 \% and \mbox{60.8 s}.
It is followed by \BRIIP, which achieves an optimality rate of 92.4 \% and an average execution time of 103.3 s; this implies that our approach is 69.9 \% faster on average, corresponding to an average speedup of 1.7x.
Next comes \JOVIP, with 92.0 \% and 141.5 s.
By a large margin, \FORTIP ranks last, with only \mbox{53.5 \%} and 431.7 s.
Notably, these time differences persist when averaging only over runs that reach optimality: \mbox{11.3 s} for \EFPSIP-OutP-Init, 35.2 s for \FORTIP, 37.5 s for \BRIIP, and 75.4 for \JOVIP.
Additionally, the average relative gap over runs that hit the time limit is \mbox{27.81 \%} for \FORTIP, while it is very small for the remaining models, ranging from 0.18 \% to 0.24 \%.

Figure \ref{fig.EFPS.vs.Literature.cumulative.time} and Figure \ref{fig.EFPS.vs.Literature.cumulative.gap} show the cumulative percentage of solved runs as a function of execution time and relative gap, respectively.
Figure \ref{fig.EFPS.vs.Literature.time} shows the average execution time, disaggregated by network size.
The models \EFPSIP-OutP-Init, \BRIIP, and \JOVIP exhibit comparable performance, with \EFPSIP-OutP-Init consistently outperforming the others.
The model \FORTIP struggles on most networks instead.
Figure \ref{fig.EFPS.vs.Literature.best-performing} illustrates the best-performing model for each instance, revealing a clear superiority of \EFPSIP-OutP-Init, which is the best-performing model in 90.8 \% of the instances.
For tied instances, \EFPSIP-OutP-Init and \BRIIP achieve identical performance.

\begin{figure}
\begin{subfigure}{0.48\textwidth}
\includegraphics[width=\linewidth]{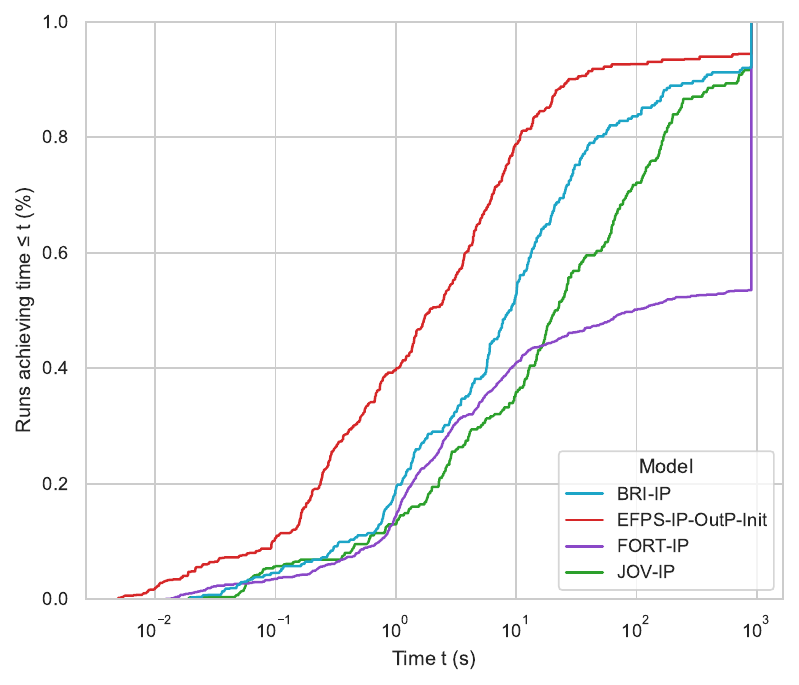}
\caption{Cumulative percentage of runs solved within $t$ seconds.}
\label{fig.EFPS.vs.Literature.cumulative.time}
\end{subfigure}
\hfill
\begin{subfigure}{0.49\textwidth}
\includegraphics[width=\linewidth]{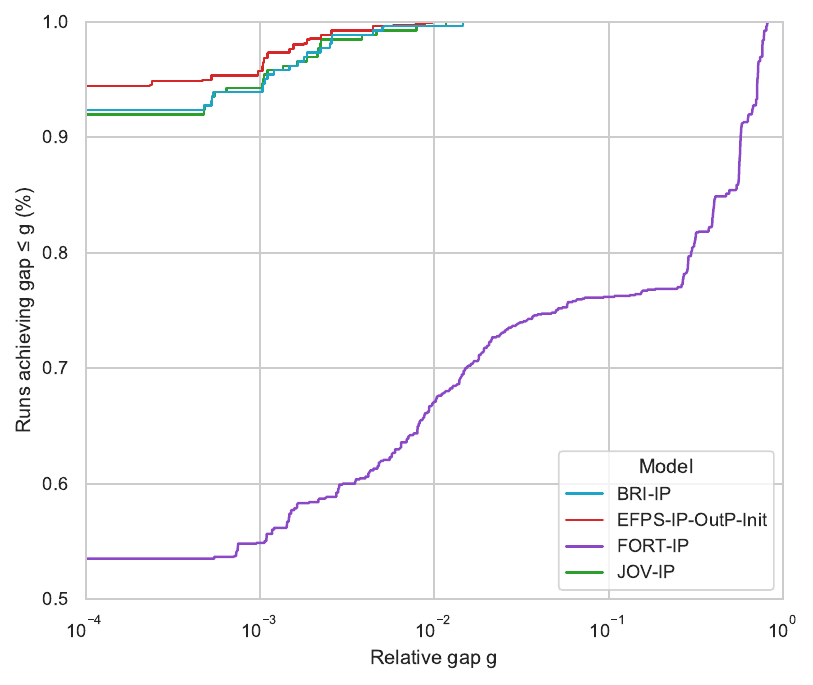} 
\caption{Cumulative percentage of runs with relative gap at most $g$.}
\label{fig.EFPS.vs.Literature.cumulative.gap}
\end{subfigure}

\begin{subfigure}{0.47\textwidth}
\includegraphics[width=\linewidth]{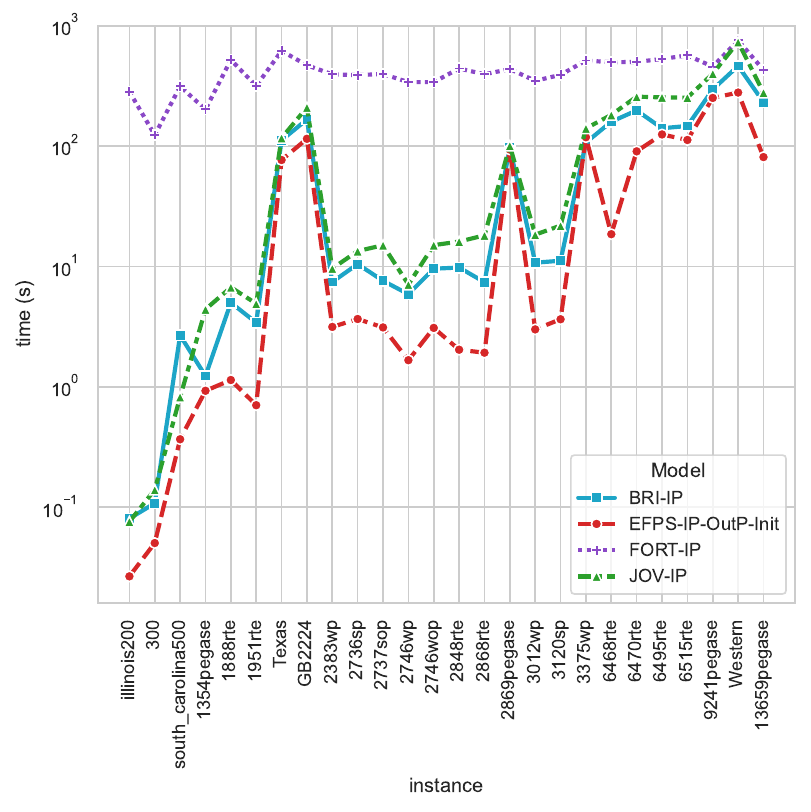}
\caption{Average execution time over all capacities by power network}
\label{fig.EFPS.vs.Literature.time}
\end{subfigure}
\hfill
\begin{subfigure}{0.49\textwidth}
\includegraphics[width=\linewidth]{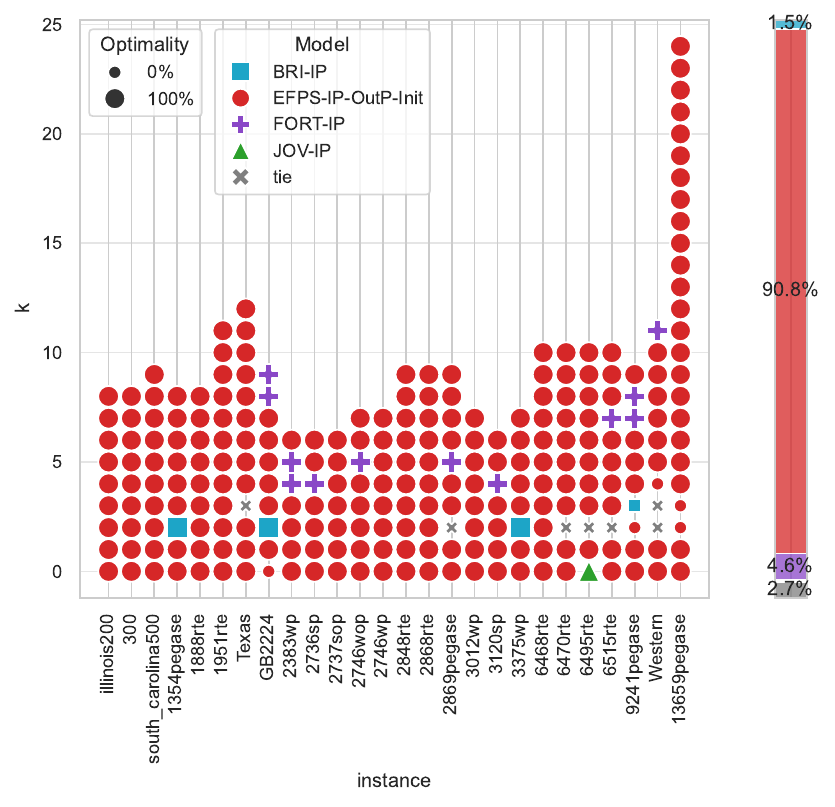}
\caption{Best-performing model by instance (power network and capacity)}
\label{fig.EFPS.vs.Literature.best-performing}
\end{subfigure}

\caption{Comparison between \EFPSIP and existing models from the literature}
\label{fig.EFPS.vs.Literature}
\end{figure}

Finally, Figure \ref{fig.EFPS.vs.Literature.Grid} extends Figure \ref{fig.EFPS.vs.Literature.time} by presenting execution time as a function of capacity for some power networks.
\EFPSIP-OutP-Init is consistently faster, with speedups of up to an order of magnitude for certain capacities and networks.
Peaks at $k = 2$ are observed for all models except \FORTIP, which instead appears to follow a decreasing trend as capacity increases, becoming competitive toward higher values of $k$.

\begin{figure}
    \centering
    \includegraphics[width=\linewidth]{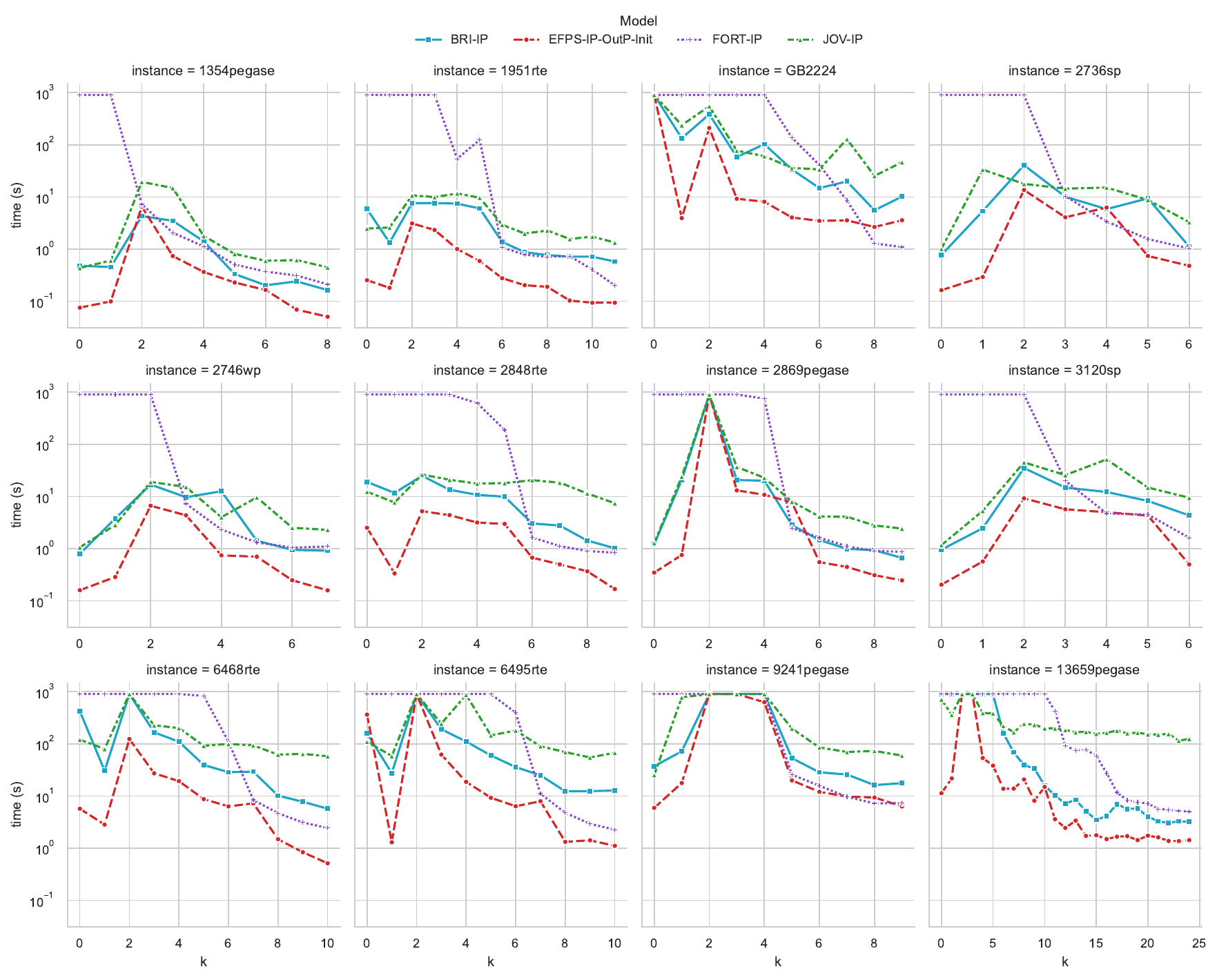}
    \caption{Evolution of the execution time for \EFPSIP-OutP-Init and existing models from the literature for selected power networks.}
    \label{fig.EFPS.vs.Literature.Grid}
\end{figure}

\section{Conclusions and future work}\label{sec.conclusions.future.work}

This work addresses a capacitated variant of the power dominating set problem, which is more consistent with real-world applications in power network monitoring.
In this variant, the common assumption that a vertex can monitor all of its neighbors is relaxed by introducing capacities, reflecting the limited number of channels available in PMUs due to manufacturing decisions.

We introduce the novel concept of forbidden propagation sets (FPSs), which are sets of propagations that cannot be applied together because cyclic temporal precedences are created.
These structures are the core of this work and give rise to new ILP formulations.
Our formulations share some of the standard variables of infection-based models, but do not rely on any big-$M$ constraints. 
However, as in fort-based models, the number of FPSs may grow exponentially; consequently, the resulting models are solved using a lazy-constraint generation approach.
Through an in-depth structural study, we characterize minimal FPSs, which leads to an efficient separation procedure that dynamically generates violated constraints via cycle detection in a particular digraph.

The computational experiments on benchmark power networks with up to 14,000 vertices yield promising results.
The proposed approach exhibits outstanding performance, outperforming the adaptation of \citet{Brimkov2019} by an average factor of 1.70 in runtime,  \citet{Jovanovic2020} by a factor of 2.33, and \citet{Blasius2024} by a factor of 7.10; under the setting of our computational experiments.
We observe that performance depends not only on the size of the power network, but also on the capacity.
In particular, for $k=2$ all approaches exhibit significant difficulties; also observed by \citet{Carvalho2018}, but under a slightly different propagation rule.
Moreover, the current state of the art for solving the power dominating set problem, which is based on forts, struggles in the presence of capacities.
Indeed, the performance of the fort-based model deteriorates as the capacities decrease.

Based on these results, we conducted an additional computational experiment to evaluate the performance of \BRIIP when incorporating OutP constraints.
Although these constraints lead to some performance improvement, the gains are rather limited, since our approach still achieves an average speedup of 1.36x in execution time and outperforms it on 92.7 \% of the instances.

We believe that the FPS-based approach can be further improved through a deeper structural analysis aimed at identifying valid inequalities that strengthen the models.
We also experimented with the use of EFPS constraints to cut fractional solutions.
The preliminary results were encouraging, but this approach requires further investigation to become practically effective.

Another promising direction is to combine FPS-based models with decomposition techniques and reduction rules.
In this regard, it would be worthwhile to study whether the methods of \citet{Blasius2024} for the power dominating set problem can be adapted to the capacitated variant.
It would also be interesting to extend the concept of FPSs to related variants, such as problems with heterogeneous capacities or objective functions that depend on capacities.
Finally, alternative FPS-based models with variables associated with subsets of at most $k$ neighbors appear promising, representing vertices monitored through the application of the domination rule; see \citet{Korkali2009}.
Solving the resulting formulations would require not only row (constraint) generation, but also column generation to handle the exponential number of variables, embedded within a branch-and-price framework.



\bibliographystyle{model5-names} 
\bibliography{bibliography}

@article{Baldwin1993,
author={Baldwin, T.L. and Mili, L. and Boisen, M.B. and Adapa, R.},
journal={IEEE Transactions on Power Systems}, 
title={Power system observability with minimal phasor measurement placement}, 
year={1993},
volume={8},
number={2},
pages={707-715},
doi={10.1109/59.260810}
}

@article{Haynes2002,
author = {Haynes, Teresa W. and Hedetniemi, Sandra M. and Hedetniemi, Stephen T. and Henning, Michael A.},
title = {Domination in Graphs Applied to Electric Power Networks},
journal = {SIAM Journal on Discrete Mathematics},
volume = {15},
number = {4},
pages = {519-529},
year = {2002},
doi={10.1137/S0895480100375831}
}

@article{Brueni2005,
author = {Brueni, Dennis J. and Heath, Lenwood S.},
title = {The PMU Placement Problem},
journal = {SIAM Journal on Discrete Mathematics},
volume = {19},
number = {3},
pages = {744-761},
year = {2005},
doi={10.1137/S0895480103432556}
}

@article{Guo2008,
author={Guo, Jiong and Niedermeier, Rolf and Raible, Daniel},
title={Improved Algorithms and Complexity Results for Power Domination in Graphs},
journal={Algorithmica},
year={2008},
month={Oct},
day={01},
volume={52},
number={2},
pages={177-202},
issn={1432-0541},
doi={10.1007/s00453-007-9147-x}
}

@article{Xu2006,
title = {Power domination in block graphs},
journal = {Theoretical Computer Science},
volume = {359},
number = {1},
pages = {299-305},
year = {2006},
issn = {0304-3975},
author = {Guangjun Xu and Liying Kang and Erfang Shan and Min Zhao},
doi={10.1016/j.tcs.2006.04.011}
}

@article{Liao2013,
author={Liao, Chung-Shou
and Lee, D. T.},
title={Power Domination in Circular-Arc Graphs},
journal={Algorithmica},
year={2013},
month={Feb},
day={01},
volume={65},
number={2},
pages={443-466},
issn={1432-0541},
doi={10.1007/s00453-011-9599-x}
}

@article{Binkele-Raible2012,
author={Binkele-Raible, Daniel
and Fernau, Henning},
title={An Exact Exponential Time Algorithm for Power Dominating Set},
journal={Algorithmica},
year={2012},
month={Jun},
day={01},
volume={63},
number={1},
pages={323-346},
issn={1432-0541},
doi={10.1007/s00453-011-9533-2}
}

@article{Aazami2010,
author={Aazami, Ashkan},
title={Domination in graphs with bounded propagation: algorithms, formulations and hardness results},
journal={Journal of Combinatorial Optimization},
year={2010},
month={May},
day={01},
volume={19},
number={4},
pages={429-456},
issn={1573-2886},
doi={10.1007/s10878-008-9176-7}
}

@inproceedings{Fan2012,
author="Fan, Neng
and Watson, Jean-Paul",
editor="Lin, Guohui",
title="Solving the Connected Dominating Set Problem and Power Dominating Set Problem by Integer Programming",
booktitle="Proceedings of the International Conference on Combinatorial Optimization and Applications",
series="Lecture Notes in Computer Science",
volume="7402",
year="2012",
publisher="Springer",
pages="371--383",
note={{Banff, Canada}},
isbn="978-3-642-31770-5",
doi={10.1007/978-3-642-31770-5_33}
}

@article{Bozeman2019,
author={Bozeman, Chassidy
and Brimkov, Boris
and Erickson, Craig
and Ferrero, Daniela
and Flagg, Mary
and Hogben, Leslie},
title={Restricted power domination and zero forcing problems},
journal={Journal of Combinatorial Optimization},
year={2019},
month={Apr},
day={01},
volume={37},
number={3},
pages={935-956},
issn={1573-2886},
doi={10.1007/s10878-018-0330-6}
}

@article{Brimkov2019,
author={Brimkov, Boris
and Mikesell, Derek
and Smith, Logan},
title={Connected power domination in graphs},
journal={Journal of Combinatorial Optimization},
year={2019},
month={Jul},
day={01},
volume={38},
number={1},
pages={292-315},
issn={1573-2886},
doi={10.1007/s10878-019-00380-7}
}

@article{Jovanovic2020,
author = {Jovanovic, Raka and Voss, Stefan},
title = {The fixed set search applied to the power dominating set problem},
journal = {Expert Systems},
volume = {37},
number = {6},
pages = {e12559},
year = {2020},
doi={10.1111/exsy.12559}
}

@article{Blasius2024,
author = {Bl\"{a}sius, Thomas and G\"{o}ttlicher, Max},
title = {An Efficient Algorithm for Power Dominating Set},
year = {2024},
issue_date = {Mar 2025},
publisher = {Springer-Verlag},
address = {Berlin, Heidelberg},
volume = {87},
number = {3},
issn = {0178-4617},
journal = {Algorithmica},
month = dec,
pages = {344–376},
numpages = {33},
doi={10.1007/s00453-024-01283-8}
}

@article{Carvalho2018,
title = {Observability of power systems with optimal PMU placement},
journal = {Computers \& Operations Research},
volume = {96},
pages = {330-349},
year = {2018},
issn = {0305-0548},
author = {Margarida Carvalho and Xenia Klimentova and Ana Viana},
doi={10.1016/j.cor.2017.10.012}
}

@inproceedings{Korkali2009,
author={Korkali, Mert and Abur, Ali},
booktitle={Proceedings of the IEEE Power \& Energy Society General Meeting}, 
title={Placement of PMUs with channel limits}, 
year={2009},
volume={},
number={},
pages={1-4},
note={{Calgary, Canada}},
doi={10.1109/PES.2009.5275529}
}

@article{Joshi2021,
title = {Synchrophasor measurement applications and optimal PMU placement: A review},
journal = {Electric Power Systems Research},
volume = {199},
pages = {107428},
year = {2021},
issn = {0378-7796},
author = {Prachi Mafidar Joshi and H.K. Verma},
doi={10.1016/j.epsr.2021.107428}
}

@article{Almasabi2019,
author={Almasabi, Saleh and Mitra, Joydeep},
journal={IEEE Transactions on Smart Grid}, 
title={A Fault-Tolerance Based Approach to Optimal PMU Placement}, 
year={2019},
volume={10},
number={6},
pages={6070-6079},
doi={10.1109/TSG.2019.2896211}
}

@ARTICLE{MATPOWER,
  author={Zimmerman, Ray Daniel and Murillo-Sánchez, Carlos Edmundo and Thomas, Robert John},
  journal={IEEE Transactions on Power Systems}, 
  title={MATPOWER: Steady-State Operations, Planning, and Analysis Tools for Power Systems Research and Education}, 
  year={2011},
  volume={26},
  number={1},
  pages={12-19},
  doi={10.1109/TPWRS.2010.2051168}
}

@article{PANDAPOWER,
author={L. Thurner and A. Scheidler and F. Schafer and J. H. Menke and J. Dollichon and F. Meier and S. Meinecke and M. Braun},
journal={IEEE Transactions on Power Systems},
title={pandapower - an Open Source Python Tool for Convenient Modeling, Analysis and Optimization of Electric Power Systems},
year={2018},
volume={33},
number={6},
pages={6510-6521},
ISSN={0885-8950},
doi={10.1109/TPWRS.2018.2829021}
}

@misc{GUROBI,
author = {{Gurobi Optimization, LLC}},
title = {{Gurobi Optimizer Reference Manual}},
year = 2026,
url = "https://www.gurobi.com"
}

@article{Lucci2025,
title = {Integer linear programs for the power dominating set problem with channel limitation},
journal = {Procedia Computer Science},
volume = {273},
pages = {54-61},
year = {2025},
issn = {1877-0509},
author = {Mauro Lucci and Diego {Delle Donne} and Mariana Escalante},
doi={10.1016/j.procs.2025.10.280}
}

\end{document}